\newtheorem{remark}{Remark}
\newcommand{\beq}{\begin{equation}}
\newcommand{\eeq}{\end{equation}}
\newcommand{\bdm}{\begin{displaymath}}
\newcommand{\edm}{\end{displaymath}}
\newcommand{\beqa}{\begin{eqnarray}}
\newcommand{\eeqa}{\end{eqnarray}}
\newcommand{\beqas}{\begin{eqnarray*}}
\newcommand{\eeqas}{\end{eqnarray*}}
\newcommand{\bb}{{\bf b}}
\newcommand{\bn}{{\bf n}}
\newcommand{\bxi}{\boldsymbol\xi}
\newcommand{\bp}{{\bf p}}
\newcommand{\bq}{{\bf q}}
\newcommand{\br}{{\bf r}}
\newcommand{\bu}{{\bf u}}
\newcommand{\bv}{{\bf v}}
\newcommand{\bw}{{\bf w}}
\newcommand{\bx}{{\bf x}}
\newcommand{\by}{{\bf y}}
\newcommand{\bz}{{\bf z}}
\newcommand{\bh}{{\bf h}}
\newcommand{\bzz}{{\bf 0}}
\newcommand{\bR}{{\bf R}}
\newcommand{\bbR}{\mathbb{R}}
\newcommand{\Div}{\nabla\cdot}
\newcommand{\tr}{\mbox{\normalfont{tr}}}
\newcommand{\Ldel}{\mathcal{L}^\delta}
\newcommand{\Ls}{\mathcal{L}^\delta_s}
\newcommand{\Lz}{\mathcal{L}^\delta_{0}}
\newcommand{\N}{\mathcal{N}}
\newcommand{\Ns}{\mathcal{N}_s}
\newcommand{\Ld}{\mathcal{L}^\delta_d}
\newcommand{\Nd}{\mathcal{N}_d}
\newcommand{\limd}{\lim_{\delta\rightarrow 0}}
\newcommand{\BigO}[1]{\ensuremath{\operatorname{O}\left(#1\right)}}
\newcommand{\Bp}[1]{B^+_\delta\left(#1\right)}
\newcommand{\Bm}[1]{B^-_\delta(#1)}
\newcommand{\jump}[1]{\left[#1\right]^+_-}
\newtheorem{thm}{Theorem}
\newtheorem{cor}{Corollary}
\newtheorem{lem}{Lemma}
\newtheorem{prop}{Proposition}
\numberwithin{equation}{section}
\begin{document}
\nocite{*}
\date{}

\title{Peridynamics and Material Interfaces}
\author{Bacim Alali$^{\tiny a}$ \and Max Gunzburger$^{\tiny b}$}
\maketitle
\let\thefootnote\relax\footnote{$^{ a}$Department of Mathematics, Kansas State University, Manhattan, KS 66506.\\{\it bacimalali@math.ksu.edu.}
}
%\\
\let\thefootnote\relax\footnote{$^{ b}$  
%\at 
Department of Scientific Computing, Florida State University, 
%400 Dirac Science Library, 
Tallahassee, FL 32306.\\{\it mgunzburger@fsu.edu.}
}

%%%%%%%%%%%%%%%%%%%%%%%%%%%%%%%%%%%%%%%%%%%%%%%%%%%%%%%%%%%%%%%%%%%%%%%%%%%%%%%%%%%%%%%%%%%%%%%%%%%%%%%%%
%%%%%%%%%%%        Abstract           %%%%%%%%%%%%%%%%%%%%%%%%%%%%%%%%%%%%%%%%%%%%%%%%%%%%%%%%%%%%%%%%%%%
%%%%%%%%%%%%%%%%%%%%%%%%%%%%%%%%%%%%%%%%%%%%%%%%%%%%%%%%%%%%%%%%%%%%%%%%%%%%%%%%%%%%%%%%%%%%%%%%%%%%%%%%%

\begin{abstract}
%Peridynamics is a nonlocal theory for continuum mechanics. 
The convergence of a peridynamic model for solid mechanics inside  heterogeneous media in the limit of vanishing nonlocality is analyzed.
It is shown that the operator of linear peridynamics for an isotropic heterogeneous medium converges
to the corresponding operator of linear elasticity when the material properties are sufficiently regular.
On the other hand, when the material properties are discontinuous, i.e., when material interfaces are present, it is shown that the operator of linear peridynamics diverges, in the limit of vanishing nonlocality, at material interfaces.
%Nonlocal interface conditions are developed  which generalize the local interface conditions of elasticity. 
Nonlocal interface conditions, whose local limit implies the classical interface conditions of elasticity, are then developed and discussed. 
%It is shown that the {\it natural} nonlocal interface conditions of peridynamics do not converge to the local interface conditions of elasticity.
A peridynamics material interface model is introduced which generalizes
the classical interface model of elasticity. 
The model consists of a new peridynamics operator along with nonlocal interface conditions.
%The interface problem in peridynamics is resolved by adding a new term to the linear peridynamics operator together with introducing new nonlocal interface conditions.
The new  peridynamics interface model   converges
%, in the limit of vanishing nonlocality, 
to the classical interface model of linear elasticity.
\end{abstract}

%\keywords{Peridynamics\and Heterogeneous materials\and Material interfaces\and Nonlocal interface conditions.}
%\subclass{74}

%%%%%%%%%%%%%%%%%%%%%%%%%%%%%%%%%%%%%%%%%%%%%%%%%%%%%%%%%%%%%%%%%%%%%%%%%%%%%%%%%%%%%%%%%%%%%%%%%%%%%%%%%
%%%%%%%%%%%        Section 1       Introduction          %%%%%%%%%%%%%%%%%%%%%%%%%%%%%%%%%%%%%%%%%%%%%%%%
%%%%%%%%%%%%%%%%%%%%%%%%%%%%%%%%%%%%%%%%%%%%%%%%%%%%%%%%%%%%%%%%%%%%%%%%%%%%%%%%%%%%%%%%%%%%%%%%%%%%%%%%%

\section{Introduction}
\label{sec_intro}

Peridynamics \cite{Silling39,silling2007peridynamic} is a nonlocal theory for continuum mechanics. 
Material points interact through forces that act over a finite distance with the maximum interaction radius being called the peridynamics {\it horizon}.
%Particles interact through nonlocal forces within a fixed finite distance $\delta$, called the peridynamics {\it horizon}. Revision Oct.2014
Peridynamics is a  generalization to elasticity theory in the sense that 
 peridynamics operators converge to corresponding elasticity operators in the limit of vanishing horizon.
%Specifically, it 
These convergence results have been shown   for different cases; see \cite{emmrich2007well,peridconvg,nonlocal_calc_peridy_2013,tadele_du_2013}. For example,  in a linear isotropic homogeneous medium,
%, that is when $\lambda(\bx)=\lambda_0$ and $\mu(\bx)=\mu_0$, where $\lambda_0$ and $\mu_0$ 
%are constants, it has been shown in  that%, 
and under certain regularity assumptions on the vector field 
$\bv$, 
it has been shown in \cite{emmrich2007well} that 
\begin{equation}
  \limd\Ls\bv=\Ns\bv \;\;\;\; \mbox{ in } L^\infty(\Omega)^3
\end{equation}
and in \cite{nonlocal_calc_peridy_2013} it has been shown that
\begin{equation}
  \limd \Ldel\bv=\N\bv \;\;\;\; \mbox{ in } H^{-1}(\mathbb{R}^3),
\end{equation}
%where $\Omega$ is a bounded domain, $\Ls$ and $\Ldel$ are linear peridynamics operators, and $\Ns$ and $\N$ are linear elasticity operators (see Section \ref{overview_sec} for the definitions of these operators). 
where $\Omega$ is a bounded domain, $\Ls$ is the bond-based and $\Ldel$ is the state-based linear peridynamics operators, and $\Ns$ and $\N$ are the corresponding linear elasticity operators, respectively (see Section \ref{overview_sec} for the definitions of these operators).

%In this work, we focus on the behavior of peridynamics inside heterogeneous media in the limit of vanishing horizon. Revision Oct.2014 
In this work, we study the behavior of linear peridynamics inside heterogeneous media in the limit of vanishing horizon. 
We focus on the linear peridynamics model for solids given in \cite{silling2007peridynamic,lin_Silling}. %Revision Oct.2014  
We note that other models for linear peridynamic solids have been proposed; see for example
\cite{aguiar2013constitutive}. %Revision Oct.2014 
In Theorem~\ref{thm_convg} and Proposition~\ref{prop1} of this work %Revision Oct.2014 
we show that when the vector-field $\bv$ and  the material properties are sufficiently differentiable then  
%We show that the linear peridynamics operator converges to the linear elasticity operator inside isotropic heterogeneous media. We find that, under certain regularity assumptions on the vector-field $\bv$ as well as on the material properties,
\begin{equation}
\label{convg1}
 \limd \Ldel\bv=\N\bv \;\;\;\; \mbox{ in } L^p(\Omega)^3, \;\;\; 1\leq p<\infty, 
\end{equation}
and
\begin{equation}
\label{convg2}
 \limd \Ls\bv=\Ns\bv \;\;\;\; \mbox{ in } L^p(\Omega)^3, \;\;\; 1\leq p<\infty,
\end{equation}
%for $1\leq p<\infty$. 
where $\Ldel$ and $\Ls$ are the state-based and bond-based  linear peridynamics operator for an isotropic heterogeneous medium and $\N$ and $\Ns$ are the corresponding operators of linear elasticity, respectively.
%This result holds true
%under certain regularity assumptions on the vector-field $\bv$ as well as on the material properties.
In addition, we show  that continuity of the material properties is a necessary condition for the convergence of peridynamics to elasticity.
Indeed, if the material properties have jump discontinuities, as for example in multi-phase composites, then 
%%Bacim Nov 2014 begin
it is shown in Theorem~\ref{nonconvg_thm} and Lemma~\ref{nonconvg_1} that  the local limits of the 
peridynamic operators do not exist. In particular, we find that for points $\bx$ on the interface,
\begin{equation}
\label{nonconvg1}
  \limd \left(\Ldel\bv\right)(\bx) \mbox{ does not exist},
\end{equation}
and
\begin{equation}
\label{nonconvg2} 
  \limd \left(\Ls\bv\right)(\bx) \mbox{ does not exist}.
 \end{equation}
%peridynamics operator $\Ldel$ diverges, as $\delta\rightarrow 0$, at material interfaces.
%In addition, Lemma~\ref{nonconvg_1} shows that the bond-based operator $\Ls$  diverges at material interfaces.

%%Indeed, it is shown that for multi-phase composites peridynamics diverges, in the limit of vanishing horizon, at material interfaces.

%Our focus in this work is on the state-based linear peridynamic operator $\mathcal{L}^\delta$.  
%%Bacim Nov 2014 end

We consider the classical interface model in linear elasticity inside a two-phase composite. The strong form of the elastic equilibrium problem
%, assuming perfect contact between the phases, 
is given by the following system of partial differential equations  and interface conditions:
\begin{numcases}{\label{interface_pde0}}
\label{interface_pde0_1}
\Div\sigma(\bx)&$\displaystyle=\bb(\bx),\;\;\; \bx\in\Omega_+$ \\
\label{interface_pde0_2}
\Div\sigma(\bx)&$\displaystyle=\bb(\bx),\;\;\; \bx\in\Omega_-$\\[1.5ex]
%\displaystyle\jump{\sigma\bn}&=0,\;\;\; &\bx\in \Gamma\\
%\displaystyle\jump{\bu}&=0,\;\;\; &\bx\in \Gamma
\label{interface_pde0_3}
\sigma\bn(\bx^+) &$\displaystyle=\sigma\bn(\bx^-),\;\; \bx\in \Gamma$\\
\label{interface_pde0_4}
\bu(\bx^+) &$\displaystyle=\bu(\bx^-),\;\;\;\; \bx\in \Gamma$,
\end{numcases}
where $\sigma$ is the stress tensor, $\bu$ is the displacement field, $\bb$ is a body force density,
$\bn$ is the unit normal to the interface, and  $\Omega=\Omega_+\cup\Omega_-\cup\Gamma$, 
 with $\Gamma$ being the interface between the two phases $\Omega_+$ and $\Omega_-$. 
Equations \eqref{interface_pde0_3} and \eqref{interface_pde0_4} are the interface jump conditions, 
assuming continuity of the displacement field and traction across the interface.

Developing a  material interface model which  generalizes  the classical interface model of elasticity to the  nonlocal setting is
an open problem in peridynamics.
%Developing a  material interface model which is a nonlocal generalization to  the classical interface model of elasticity is
%an open problem in peridynamics. 
The fact that  interface conditions are necessary for a classical solution of \eqref{interface_pde0} to exist together with the fact that  the 
peridynamics operator diverges, in the limit of vanishing horizon, at material interfaces
strongly suggests that {\it nonlocal interface conditions} must be imposed in a peridynamics model for heterogeneous media in the presence of material interfaces.
%Developing a peridynamics model for heterogeneous media in the presence of material interfaces 
%which is consistent with 
%the classical interface model of elasticity is
%an open problem in peridynamics.
%/ a model is an open problem  Another problem
Therefore, a peridynamics interface model which is locally consistent with the interface model of elasticity is required to satisfy the 
following three conditions:
\begin{itemize}
 \item[] \hspace{-.5cm}{\bf C(i)} \hspace{.35cm} Nonlocal interface conditions must be imposed such that the peridynamics  operator converges, in the local limit, to the corresponding elasticity   operator. 
 % \item[(i)] The peridynamics  operator converges to the corresponding elasticity   operator. This implies that the peridynamics operator does not diverge, in the local limit, at material interfaces.
%\item[(i)] If nonlocal interface conditions are imposed, then the linear elasticity operator is the local limit of a corresponding peridynamics operator.
\item[] \hspace{-.5cm}{\bf C(ii)} \hspace{.1cm}  The interface conditions in elasticity are recovered from the local limit of the nonlocal interface conditions in peridynamics.
\item[] \hspace{-.5cm}{\bf C(iii)} \hspace{.1cm}  The nonlocal interface conditions are integral equations that do not include spatial derivatives of the displacement field.
\end{itemize}
We note that  condition C(i) implies that the peridynamics operator is required not to diverge, in the  limit of vanishing horizon, 
at material interfaces. Condition C(iii) requires that the nonlocal interface conditions  be compatible with the peridynamics model. 
Peridynamics is formulated with integral equations and oriented towards modeling discontinuities, and thus peridynamics equations do not include 
spatial derivatives of the displacement field.  

We consider the following peridynamics model, under equilibrium conditions, for heterogeneous media in the presence of material interfaces
% \begin{equation}
% \label{natural_interface_sys}
%  \left\{
% \begin{array}{rll}
%  \displaystyle \Ldel\bu(\bx) &=\bb(\bx),\;\;\; &\bx\in\Omega\\
%  \\
%  \displaystyle \Ldel\bu(\bx) &=0,\;\;\; &\bx\in \Gamma\\
% %\displaystyle\jump{\bu}&=0,\;\;\; &\bx\in \Gamma
% %\displaystyle \bu(\bx^+) &=\bu(\bx^-),\;\;\; &\bx\in \Gamma
% \end{array}
% \right.
% \end{equation}
% %\begin{equation}
% %\label{natural_interface_sys}

\begin{numcases}{\label{natural_interface_sys}}
\label{natural_interface_sys_1}
    \Ldel\bu(\bx) =\bb(\bx),\;\;\; \bx\in\Omega
    \\
\label{natural_interface_sys_2}
\displaystyle \Ldel\bu(\bx)=0,\;\;\;\;\;\;\;\;\, \bx\in \Gamma.
\end{numcases}
%    \label{natural_interface_sys_1}
%\left\{\Ldel\bu(\bx) &=&\bb(\bx),\;\;\; \bx\in\Omega\\
%\nonumber
% \\
%\label{natural_interface_sys_2}
% \displaystyle \Ldel\bu(\bx) &=&0,\;\;\; \bx\in \Gamma
%\displaystyle\jump{\bu}&=0,\;\;\; &\bx\in \Gamma
%\displaystyle \bu(\bx^+) &=\bu(\bx^-),\;\;\; &\bx\in \Gamma
Equation \eqref{natural_interface_sys_2} is a nonlocal interface condition. 
% By imposing \eqref{natural_interface_sys_2},
% and under the assumptions that
% %we show that when 
% the material properties are sufficiently  differentiable in $\Omega\setminus\Gamma$ and have jump discontinuities at
% the interface $\Gamma$, and that the displacement field $\bu$ is sufficiently  differentiable in $\Omega\setminus\Gamma$ and
% continuous across  $\Gamma$, we show that
By imposing \eqref{natural_interface_sys_2}, and under certain regularity assumptions on the material properties and 
the displacement field, we show that
\[
 \limd \Ldel\bv=\N\bv \;\;\;\; \mbox{ in } L^p(\Omega)^3 
\]
for $1\leq p<\infty$. Thus, the peridynamics interface model \eqref{natural_interface_sys} 
satisfies conditions C(i) and C(iii). However, it is  shown in 
Proposition \ref{prop_wrong_interface_conditions}  that  this model does not satisfy C(ii).  
Therefore, the  interface model  \eqref{natural_interface_sys} 
is not a valid generalization of the local interface model of elasticity.
%that the local limit of the nonlocal interface condition
%\eqref{natural_interface_sys_2} does not imply this model does not satisfy Condition (ii)
We note that this result remains true if an inhomogeneity is introduced into \eqref{natural_interface_sys_2}. We also note that, in the nonlocal setting, the material interface remains sharp; however, because of the nonlocality of interactions, nonlocal interface conditions involve points on both sides of the material interface, and not just points on the material interface.

%In Section \ref{sec_perid_interface_model}, we introduce a peridynamics interface model which is locally consistent with  
%elasticity's interface model \eqref{interface_pde0}.
In Section \ref{sec_perid_interface_model}, we propose a solution to the interface problem in peridynamics.
% and satisfies the three conditions C(i)--C(iii), see Theorem \ref{thm_interface_model}. 
We develop a  peridynamics interface model which is locally consistent with  
elasticity's interface model \eqref{interface_pde0}. Our model is defined by
\begin{numcases}{\label{interface_model_sys}}
\label{interface_model_sys_1}
    \Ldel_*\bu(\bx) =\bb(\bx),\;\;\; \bx\in\Omega
    \\
\label{interface_model_sys_2}
\displaystyle \Ldel_*\bu(\bx)=0,\;\;\;\;\;\;\;\;\, \bx\in \Gamma_\delta,
\end{numcases}
where $\Gamma_\delta$ is an extended interface, which 
is a three-dimensional set of thickness $2\delta$, and the operator $\Ldel_*$ is of the form
\begin{eqnarray}
\label{L*_0_t}
\Ldel_*\bu=\Ldel\bu+1_{\Gamma_\delta}\, \Ldel_{\Gamma_\delta}\bu,
\end{eqnarray}
with $1_{\Gamma_\delta}$ being the indicator function of the set $\Gamma_\delta$. 
The new operator $\Ldel_{\Gamma_\delta}$ acts  on the displacement field but only at points in the extended interface $\Gamma_\delta$. 
The set $\Gamma_\delta$ and the operator $\Ldel_{\Gamma_\delta}$ are explicitly defined in Section 
\ref{sec_perid_interface_model}.
%the precise definitions of these quantities are given in Section \ref{} 
Equation \eqref{interface_model_sys_2} is the peridynamics nonlocal interface condition for our interface model.
By imposing \eqref{interface_model_sys_2},
and under the assumptions that
%we show that when 
the material properties are sufficiently  differentiable in $\Omega\setminus\Gamma$ and have jump discontinuities at
the interface $\Gamma$, and that the displacement field $\bu$ is sufficiently  differentiable in $\Omega\setminus\Gamma$ and
continuous across  $\Gamma$, we show in Theorem~\ref{thm_interface_model} that
\[
 \limd \Ldel_*\bv=\N\bv \;\;\;\; \mbox{ in } L^p(\Omega)^3
\]
for $1\leq p<\infty$. Moreover, we show that the local interface condition \eqref{interface_pde0_3} can be recovered from the local limit
of the nonlocal interface condition \ref{interface_model_sys_2}.
Therefore, the peridynamics material interface model \eqref{interface_model_sys} 
satisfies the three conditions C(i)--C(iii) and hence serves as a peridynamics generalization to the classical elasticity interface model.

%%%%% Bacim Nov 3 2014
Here we discuss the mechanical interpretations and implications of the main results in this work.\\
%From a mechanics point of view, 
The convergence results given by \eqref{convg1} and \eqref{convg2}, which are introduced in Theorem~\ref{thm_convg} and Proposition~\ref{prop1}, respectively, imply that peridynamics (bond-based or state-based) is a nonlocal generalization of the local continuum theory in the case of isotropic heterogeneous media with smoothly varying material properties. This extends the previous peridynamics convergence results for  homogeneous media  \cite{emmrich2007well,peridconvg,nonlocal_calc_peridy_2013}.
%to the case of heterogenous media with smooth material properties. 

In the case of heterogeneous media with discontinuous material properties, our results given by \eqref{nonconvg1} and \eqref{nonconvg2}, which are introduced in Theorem~\ref{nonconvg_thm} and Lemma~\ref{nonconvg_1}, respectively, imply that the local limit of the peridynamic force is infinite at the material interface. 
%While for multi-phase composites in which material interfaces are present 
This divergence behavior can be explained mathematically through the fact that material interfaces break the inherent symmetry of the peridynamic operators. Mechanically, the divergence of peridynamics is due to the mismatch in the nonlocal tractions on each side of the interface. In fact, the divergence of the local limit of peridynamics at material interfaces is not surprising because in the local interface problem \eqref{interface_pde0} one must impose interface conditions to obtain a well-posed system. 
Therefore, when material interfaces are present,  nonlocal  interface conditions must be imposed in order for peridynamics to converge to a local theory.
The goal of imposing nonlocal interface conditions is to fix the mismatch in  the nonlocal tractions on each side of the interface.
One way to achieve this is by imposing the nonlocal interface condition given by \eqref{natural_interface_sys_2}. Indeed, in Section~\ref{interf_model_sec_1} it is shown that when  \eqref{natural_interface_sys_2} is imposed, then $\Ldel\bu$ converges in the limit as $\delta\rightarrow 0$. However, it is shown in Proposition~\ref{prop_wrong_interface_conditions}
 that the peridynamic system given by \eqref{natural_interface_sys} does not converge to the local elastic interface model given by \eqref{interface_pde0}.
We conclude in Section~\ref{interf_model_sec} that  imposing nonlocal interface conditions alone is not sufficient to achieve a peridynamic interface model that recovers the classical interface model in the local limit.
We therefore propose the peridynamic interface model given by  \eqref{interface_model_sys} which consists of introducing a new  peridynamic operator $\Ldel_*$ together with  imposing a nonlocal interface condition given by \eqref{interface_model_sys_2}. The new operator satisfies $\Ldel_*\bu(\bx)=\Ldel\bu(\bx)$ for points $\bx\in\Omega\setminus\Gamma$. For points $\bx$ on the interface $\Gamma$, the expression $\delta \Ldel_*\bu(\bx)$ represents the jump in the nonlocal traction across the interface. This is justified in Section~\ref{sec_perid_interface_model}  in which it is shown that
\begin{equation}
\label{nonlocal_traction}
\limd \delta\Ldel_* \bu(\bx)= \frac{45}{32} \jump{\sigma}\bn.
\end{equation}
The operator $\Ldel_{\Gamma_\delta}$ in \eqref{L*_0_t},   given explicitly by \eqref{L_Gamma}, which acts on points on the extended interface $\Gamma_\delta$, can be interpreted as the missing term in peridynamics which modifies the jump in the nonlocal traction such  that \eqref{nonlocal_traction}  holds true. It follows from \eqref{nonlocal_traction}, as described in Section~\ref{sec_perid_interface_model}, that the nonlocal interface condition \eqref{interface_model_sys_2} is the nonlocal analogue of the local interface condition \eqref{interface_pde0_3}. 
Theorem~\ref{thm_interface_model} implies that the peridynamic interface model given by \eqref{interface_model_sys} is the nonlocal analogue of the local interface model given by \eqref{interface_pde0}.
%Therefore, one way to achieve convergence of peridynamics in the presence of material interfaces is through imposing nonlocal interface conditions. The goal of imposing nonlocal interface conditions is to fix the mismatch of the nonlocal traction on each side of the interface.

%For a point $\bx$ on an interface $\Gamma$, we interpret 
%$\Ldel\bu(\bx)$ represents the jump

%Question arises in the case of anisotropic materials does peridynamics converge to elasticity?

This article is organized as follows. Section \ref{overview_sec} provides an overview of linear peridynamics  and linear elasticity inside  
isotropic heterogeneous media. 
The convergence of linear peridynamics operator to linear elasticity operator for the case of heterogeneous media  
is given in Section \ref{convg_sec}. 
The divergence of the peridynamics operator, in the local limit, at material interfaces is addressed in Section \ref{nonconvg_sec}. 
Finally, in Section \ref{interf_model_sec}
nonlocal interface conditions are developed and discussed and our new peridynamics material interface model is introduced and justified.

%%%%%%%%%%%%%%%%%%%%%%%%%%%%%%%%%%%%%%%%%%%%%%%%%%%%%%%%%%%%%%%%%%%%%%%%%%%%%%%%%%%%%%%%%%%%%%%%%%%%%%%%%
%%%%%%%%%%%        Section 2     Overview     %%%%%%%%%%%%%%%%%%%%%%%%%%%%%%%%%%%%%%%%%%%%%%%%%%%%%%%%%%%
%%%%%%%%%%%%%%%%%%%%%%%%%%%%%%%%%%%%%%%%%%%%%%%%%%%%%%%%%%%%%%%%%%%%%%%%%%%%%%%%%%%%%%%%%%%%%%%%%%%%%%%%%

\section{Overview}
\label{overview_sec}

%\section{The peridynamics model for solid mechanics}
%\label{sec_peridynamics}
%Relation between the special case and the peridynamics model for isotropic solids}

%%%%%%%%%%%%%%%%%%%%%%%%%%%%%%%%%%%%%%
%% Subsection 2.1
%%%%%%%%%%%%%%%%%%%%%%%%%%%%%%%%%%%%%%
\subsection{The peridynamics model for solid mechanics}
\label{peridyn}
We consider the state-based peridynamics model introduced in \cite{silling2007peridynamic} for the dynamics of 
%isotropic heterogeneous 
deformable solids. 
To simplify the presentation, we provide a direct description of this model 
without adhering to the notation used in \cite{silling2007peridynamic}. 
Following the presentation of peridynamics given in \cite{Alali_Gunzburger1}, 
%the linearized peridynamics equation for an isotropic heterogeneous medium is given 
let $\Omega$ denote a domain in $\bbR^3$, $\bu(\bx,t)$ the displacement vector field, $\rho(\bx)$ the mass density, and $\bb(\bx,t)$ a prescribed body force density. Let $B_\delta(\bx)$ denote the ball centered at $\bx$ having radius $\delta$; here, $\delta$ denotes the peridynamics horizon. 
Then the linear peridynamics equation of motion for an isotropic heterogeneous medium is given by
\begin{equation}
\label{pd1}
 \rho(\bx) \ddot{\bu}(\bx,t) = (\Ldel \bu)(\bx) + \bb(\bx,t),\;\;\;\;\; \bx\in\Omega,
\end{equation}
where
\begin{equation}
\label{Ldel0}
\Ldel=\Ldel_s+\Ldel_d,
\end{equation}
and, for a vector field $\bv$, the operators $\Ldel_s$ and $\Ldel_d$ are given by
\small
\begin{eqnarray}
\label{Ldel_s_1}
(\Ldel_s \bv)(\bx)&=& \int_{B_\delta(\bx)}
  \frac{15}{m}\big(\mu(\bx)+\mu(\by)\big) w(|\by-\bx|)\frac{(\by-\bx)
\otimes(\by-\bx)}{|\by-\bx|^2}\big( \bv(\by)-\bv(\bx)\big)
  \,d\by,\\
  \nonumber\\
  \nonumber
  (\Ldel_d \bv)(\bx)&=& \int_{B_\delta(\bx)}\int_{B_\delta(\bx)}
  \frac{9}{m^2}
 \Big(\lambda(\bx)-\mu(\bx)\Big) w(|\by-\bx|)w(|\bz-\bx|)(\by-\bx)
\otimes(\bz-\bx)\big( \bv(\bz)-\bv(\bx)\big)
 \,d\bz d\by\\
 \nonumber\\
 \nonumber
  &&+ \int_{B_\delta(\bx)}\int_{B_\delta(\by)}
  \frac{9}{m^2}
 \Big(\lambda(\by)-\mu(\by)\Big) w(|\by-\bx|)w(|\bz-\by|)
 (\by-\bx)
\otimes(\bz-\by)\big( \bv(\bz)-\bv(\by)\big)
 \,d\bz d\by.\\
 \label{Ldel_d_1}
\end{eqnarray}
\normalsize
Here $\lambda$ and $\mu$ are Lam\'{e} parameters, with $\mu$ denoting the shear modulus, $w$ is a weighting function, and $m$ denotes a scalar weight given by $m=\int_\Omega w(|\by-\bx|) |\by-\bx|^2 d\by$. Since $w$ is a radial function in (\ref{Ldel_s_1}), (\ref{Ldel_d_1}), the material is isotropic, and $w$ can be taken to be of the form (see, for example, \cite{Silling39})
\begin{equation}
 w(|\xi|) = \left\{\begin{aligned}
                    \frac{1}{|\bxi|^r}\;, \qquad& \mbox{if $|\bxi|<\delta$} \\
                    0\;,                 \qquad& \text{otherwise}.
                   \end{aligned}
\right.
\end{equation}
In this case 
\begin{equation}
 m = \int_{B_{\delta}(0)} |\bxi|^{2-r} d\bxi = 4\pi\frac{\delta^{5-r}}{5-r}.
\end{equation}
Note that when $r<5$, $m$ is finite. To simplify the presentation, and without loss of generality, we assume that $r=2$;
consequently, $m = \frac{4}{3}\pi\delta^3 = |B_{\delta}|$ and
\begin{eqnarray}
\label{Ldel_s_2}
(\Ldel_s \bv)(\bx)&=& \frac{15}{|B_\delta|}\int_{B_\delta(\bx)}
  \big(\mu(\bx)+\mu(\by)\big) \frac{(\by-\bx)
\otimes(\by-\bx)}{|\by-\bx|^4}\big( \bv(\by)-\bv(\bx)\big)
  \,d\by,\\
  \nonumber\\
  \nonumber
  (\Ldel_d \bv)(\bx)&=& \frac{9}{|B_\delta|^2}\int_{B_\delta(\bx)}\int_{B_\delta(\bx)}
 \Big(\lambda(\bx)-\mu(\bx)\Big) 
 \frac{\by-\bx}{|\by-\bx|^2}
\otimes\frac{\bz-\bx}{|\bz-\bx|^2}\big( \bv(\bz)-\bv(\bx)\big)
 \,d\bz d\by\\
 \nonumber\\
 \nonumber
  & & + \frac{9}{|B_\delta|^2}\int_{B_\delta(\bx)}\int_{B_\delta(\by)}
 \Big(\lambda(\by)-\mu(\by)\Big) \frac{\by-\bx}{|\by-\bx|^2}
\otimes\frac{\bz-\by}{|\bz-\by|^2}\big( \bv(\bz)-\bv(\by)\big)
 \,d\bz d\by.\\
 \label{Ldel_d_2}
\end{eqnarray}
Due to symmetry we have the following identity
\begin{equation}
\label{sym}
\int_{B_\delta(\bp)}\frac{\bq-\bp}{|\bq-\bp|^2}\,d\bp=0.
\end{equation}
For points $\bx\in\Omega$ with a distance of at least $2\delta$ from the boundary $\partial \Omega$, 
the operator  $\Ldel_d$ in  \eqref{Ldel_d_2} reduces to
\begin{equation}
\label{Ldel_d_3}
(\Ldel_d \bv)(\bx)=\frac{9}{|B_\delta|^2}\int_{B_\delta(\bx)}\int_{B_\delta(\by)}
 \Big(\lambda(\by)-\mu(\by)\Big) \frac{\by-\bx}{|\by-\bx|^2}
\otimes\frac{\bz-\by}{|\bz-\by|^2}\,\bv(\bz)
 \,d\bz d\by,
\end{equation}
where we have applied (\ref{sym}).
Throughout this article, we will use the notation $A\colon B$ to denote the inner product of the same-order tensors $A$ and $B$. For example, if $A$ and $B$ are third-order tensors then
\[
A\colon B = \sum_{i,j,k} A_{i j k} B_{i j k}.
\]

%%%%%%%%%%%%%%%%%%%%%%%%%%%%%%%%%%%%%%
%% Subsection 2.2
%%%%%%%%%%%%%%%%%%%%%%%%%%%%%%%%%%%%%%

\subsection{Linear elasticity}
\label{sec_elasticity}
%The equation of motion of linear elasticity for an isotropic heterogeneous medium is given by
%\begin{equation}
%\label{lin_elast}
% \rho(\bx) \ddot{\bu}(\bx,t) = (\N \bu)(\bx) + \bb(\bx,t),\;\;\;\;\; \bx\in\Omega,
%\end{equation}
%where $\N\bu=\div\sigma$
In linear elasticity, the stress tensor for an isotropic heterogeneous medium is given by
\begin{equation}
\label{sigma}
\sigma(\bx)=\lambda(\bx) \Div\bu(\bx)\, I+\mu(\bx)(\nabla\bu(\bx)+\nabla\bu(\bx)^T),
\end{equation}
where $\bu$ is the displacement field, $I$ is the identity tensor, and $\lambda$ and $\mu$ are Lam\'{e} parameters. The equation of motion in this case is given by
\begin{equation}
\label{lin_elast}
 \rho(\bx) \ddot{\bu}(\bx,t) = (\N \bu)(\bx) + \bb(\bx,t),\;\;\;\;\; \bx\in\Omega,
\end{equation}
%this
where $\N$ is the Navier operator of linear elasticity which is given by 
\begin{eqnarray}
\label{N}
\nonumber
\N\bu &=& \Div\sigma\\
&=& \nabla(\lambda \Div\bu) + \Div(\mu (\nabla\bu+\nabla\bu^T)).
\end{eqnarray}
We decompose the operator of linear elasticity as
\begin{equation}
\label{N_decomp}
\N=\Ns+\Nd,
\end{equation}
where the operators $\Ns$ and $\Nd$ are defined by 
\begin{eqnarray}
\label{Ns}
\Ns\bv &=& \nabla(\mu \Div\bv) + \Div(\mu (\nabla\bv+\nabla\bv^T)),\\
\Nd\bv &=& \nabla((\lambda-\mu) \Div\bv),
\end{eqnarray}
for sufficiently regular vector field $\bv$. 

We note that the above decomposition of $\N$ is not a standard one; however, this decomposition will be useful for studying the relationship between the nonlocal operator of peridynamics $\Ldel$, defined in Section \ref{peridyn}, 
and the local operator of elasticity $\N$; see Section \ref{convg_sec}. 
\begin{remark}
It is easy to see that $\N=\Ns$ for materials in which $\lambda=\mu$ or, equivalently, materials with Poisson ratio $\nu=\frac{1}{4}$.
\end{remark}

%%%%%%%%%%%%%%%%%%%%%%%%%%%%%%%%%%%%%%%%%%%%%%%%%%%%%%%%%%%%%%%%%%%%%%%%%%%%%%%%%%%%%%%%%%%%%%%%%%%%%%%%%
%%%%%%%%%%%        Section 3         Convergence          %%%%%%%%%%%%%%%%%%%%%%%%%%%%%%%%%%%%%%%%%%%%%%%
%%%%%%%%%%%%%%%%%%%%%%%%%%%%%%%%%%%%%%%%%%%%%%%%%%%%%%%%%%%%%%%%%%%%%%%%%%%%%%%%%%%%%%%%%%%%%%%%%%%%%%%%%

\section{Convergence of Linear Peridynamics to Linear Elasticity Inside Heterogeneous Media}
\label{convg_sec}
% Convergence of peridynamics operators to local elasticity operators has been shown  for different cases \cite{emmrich2007well,peridconvg,nonlocal_calc_peridy_2013}. For example,  in an isotropic homogeneous medium,
% %, that is when $\lambda(\bx)=\lambda_0$ and $\mu(\bx)=\mu_0$, where $\lambda_0$ and $\mu_0$ 
% %are constants, it has been shown in  that%, 
% and under certain regularity assumptions on the vector field 
% $\bv$, 
% it has been shown in \cite{emmrich2007well} that 
% \begin{equation}
%   \limd\Ls\bv=\Ns\bv \;\;\;\; \mbox{ in } L^\infty(\Omega)^3,
% \end{equation}
% and in \cite{nonlocal_calc_peridy_2013} it has been shown that
% \begin{equation}
%   \limd \Ldel\bv=\N\bv \;\;\;\; \mbox{ in } H^{-1}(\mathbb{R}^3).
% \end{equation}

In this section we show that in a heterogeneous medium and under certain regularity assumptions on the material properties and the 
vector field $\bv$, the linear peridynamics operator $\Ldel$ converges to the linear elasticity operator $\N$ in 
the limit of vanishing horizon. This is given by Theorem \ref{thm_convg} in the last part of this section.  
%This is given by the following result.
%\begin{thm}
%\label{thm_convg}
%The 
%\end{thm}
%The proof is a consequence to other convergence results discussed and proved in this section.

We start by defining an operator $\Lz$, which is independent of material properties,
\begin{eqnarray}
\label{Lz}
\nonumber
(\Lz \bv)(\bx)&=& \frac{30}{|B_\delta|}\int_{B_\delta(\bx)}
  \frac{(\by-\bx)
\otimes(\by-\bx)}{|\by-\bx|^4}\big( \bv(\by)-\bv(\bx)\big)
  \,d\by\\
  &=& \frac{30}{|B_\delta|}\int_{B_\delta(0)}
   \frac{\bz
\otimes\bz}{|\bz|^4}\,\big( \bv(\bx+\bz)-\bv(\bx)\big)
  \,d\bz,
\end{eqnarray}
where   $\bv$ is a vector field and $\bx\in \mathbb{R}^3$. 
We note $\Lz$ is a bounded linear operator on $L^p(\Omega)^3$ for $1\leq p\leq \infty$; see for example \cite{ALperid1}. \\
%%%Bacim
%We denote by $\Omega_\delta$ the interior of $\Omega$ in which points are at a distance of at least $\delta$ from $\partial\Omega$**.
%%%
%\begin{lem}
%\label{lemma1}
%If $\bv\in C^3(\Omega)^3$ then
%\begin{equation}
%\label{lim_Lz}
%\limd (\Lz \bv)(\bx)= 2 \nabla(\Div\bv(\bx)) +\Delta\bv(\bx),
%\end{equation}
%for all $\bx$ in $\mathbb{R}^3$.
%\end{lem}
\begin{lem}
\label{lemma1}
If $\bv\in C^3(\Omega)^3$ then
\begin{equation}
\label{lim_Lz}
\limd \Lz\bv=2 \nabla(\Div\bv) +\Delta\bv,\;\;\;\; \mbox{ in } L^p(\Omega)^3\end{equation}
%for all $p\geq 1$.
for $1\leq p<\infty$.
\end{lem}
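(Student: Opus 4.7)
The plan is to Taylor expand $\bv(\bx+\bz)-\bv(\bx)$ about $\bx$ to second order and exploit the spherical symmetry of the kernel $(\bz\otimes\bz)/|\bz|^4$ against the ball $B_\delta(0)$. Since $\bv\in C^3(\Omega)^3$, we have
\[
\bv(\bx+\bz)-\bv(\bx) \;=\; \nabla\bv(\bx)\bz \;+\; \tfrac{1}{2}\nabla^2\bv(\bx)\!:\!(\bz\otimes\bz) \;+\; R(\bx,\bz),
\]
where $|R(\bx,\bz)|\le C\|\bv\|_{C^3}\,|\bz|^3$ uniformly on compact subsets. The first-order term $\nabla\bv(\bx)\bz$ produces an integrand that is odd in $\bz$ when multiplied by $(\bz\otimes\bz)/|\bz|^4$, so its integral over the symmetric domain $B_\delta(0)$ vanishes.

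The second-order term is the one that produces the limit. Using spherical coordinates and the standard identity
\[
\int_{S^2}\theta_i\theta_j\theta_k\theta_l\,d\sigma(\btheta) \;=\; \tfrac{4\pi}{15}\bigl(\delta_{ij}\delta_{kl}+\delta_{ik}\delta_{jl}+\delta_{il}\delta_{jk}\bigr),
\]
I would compute
\[
\int_{B_\delta(0)} \frac{z_iz_jz_kz_l}{|\bz|^4}\,d\bz \;=\; \frac{4\pi\delta^3}{45}\bigl(\delta_{ij}\delta_{kl}+\delta_{ik}\delta_{jl}+\delta_{il}\delta_{jk}\bigr).
\]
Multiplying by $30/|B_\delta|=45/(2\pi\delta^3)$ and the factor $\tfrac{1}{2}\partial_k\partial_l v_j$, the prefactors collapse to $1$, and summing the three Kronecker contractions yields precisely $\Delta v_i + 2\,\partial_i(\Div\bv)$, which is the claimed pointwise limit.

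For the remainder, the estimate $|R(\bx,\bz)|\le C|\bz|^3$ gives
\[
\left|\,\frac{30}{|B_\delta|}\int_{B_\delta(0)} \frac{\bz\otimes\bz}{|\bz|^4}\,R(\bx,\bz)\,d\bz\,\right| \;\le\; \frac{C'}{|B_\delta|}\int_{B_\delta(0)}|\bz|\,d\bz \;=\; O(\delta),
\]
uniformly on any compact set on which $\|\bv\|_{C^3}$ is controlled. This yields pointwise (in fact uniform, on compact subsets) convergence with an $O(\delta)$ error, from which $L^p(\Omega)^3$ convergence follows on the bounded domain $\Omega$ for every $1\le p<\infty$.

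The main obstacle is the behavior near $\partial\Omega$: for $\bx$ within distance $\delta$ of the boundary, the ball $B_\delta(\bx)$ may leave $\Omega$, so $\bv$ is to be regarded (from the definition of $\Lz$) as defined on $\mathbb{R}^3$, or else extended via a $C^3$ extension. One then observes that the pointwise estimate above is uniform in $\bx$ once the relevant derivatives of $\bv$ are bounded on a $\delta$-neighborhood of $\Omega$, and a standard dominated-convergence argument (with the integrand controlled independently of $\delta$ by $\|\bv\|_{C^2}$) upgrades pointwise convergence to $L^p$ convergence on the bounded set $\Omega$. No further subtlety is needed since all moments of the kernel involved are explicitly computable and finite.
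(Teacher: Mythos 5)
Your proposal is correct and follows essentially the same route as the paper: second-order Taylor expansion, vanishing of the odd third moment by symmetry, explicit evaluation of the fourth-moment tensor of the kernel (your isotropic identity $\delta_{ij}\delta_{kl}+\delta_{ik}\delta_{jl}+\delta_{il}\delta_{jk}$ is just a closed form of the paper's case-by-case table and yields the same contraction $\Delta v_i + 2\,\partial_i(\Div\bv)$), an $O(\delta)$ bound on the remainder, and dominated convergence to pass from pointwise to $L^p$. Your explicit remark about extending $\bv$ near $\partial\Omega$ is a point the paper leaves implicit, but it does not change the argument.
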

%\begin{remark}
%The above convergence result holds pointwise for every $\bx$ in the interior of $\Omega$. 
%\end{remark}
\begin{proof}
The Taylor expansion of $\bv$ about $\bz=\bx$ is given by
\begin{equation}
\label{taylor1}
\bv(\bx+\bz)=\bv(\bx)+\nabla\bv(\bx) \bz+\frac{1}{2} \nabla\nabla\bv(\bx)\, (\bz\otimes\bz)+\br(\bv;\bx,\bz),
\end{equation}
where
\begin{equation}
\label{remainder1}
\br(\bv;\bx,\bz)=\frac{1}{3!} \nabla\nabla\nabla\bv(\bx+ t \bz)\, (\bz\otimes\bz\otimes\bz)
\end{equation}
for some $t\in(0,1)$. By inserting \eqref{taylor1} in \eqref{Lz}, expanding the integral, and then rearranging the tensor products, we obtain
\begin{eqnarray}
\label{Lz2}
\nonumber
(\Lz \bv)(\bx)&=& \frac{30}{|B_\delta|}\int_{B_\delta(0)}
   \frac{\bz\otimes\bz\otimes \bz}{|\bz|^4}\,d\bz\, \nabla\bv(\bx)+
  \frac{30}{|B_\delta|}\int_{B_\delta(0)}
   \frac{\bz\otimes\bz\otimes \bz\otimes\bz}{|\bz|^4}\,d\bz\, \frac{1}{2} \nabla\nabla\bv(\bx)\\
   & & + \frac{30}{|B_\delta|}\int_{B_\delta(0)}
   \frac{\bz\otimes\bz}{|\bz|^4}\,\br(\bv;\bx,\bz)\,d\bz.
\end{eqnarray}
We note that, due to symmetry, the integral
\begin{equation}
\label{zzz0}
\int_{B_\delta(0)}
   \frac{\bz\otimes\bz\otimes \bz}{|\bz|^4}\,d\bz=0,
\end{equation}
with the obvious notation that $0$ in the right hand side of \eqref{zzz0} denotes the third-order zero tensor. Thus the first term in \eqref{Lz2} vanishes. 
We note that the third term in \eqref{Lz2} vanishes in the limit as $\delta\rightarrow 0$ because
\begin{eqnarray}
\label{O_delta}
\left|\frac{30}{|B_\delta|}\int_{B_\delta(0)}
   \frac{\bz\otimes\bz}{|\bz|^4}\,\br(\bv;\bx,\bz)\,d\bz\; \right| &\leq&
   \frac{M}{\delta^3} \int_{B_\delta(0)} |\bz|\,d\,\bz=\BigO{\delta},
\end{eqnarray}
for some $M>0$.
For the second term in \eqref{Lz2}, a straightforward calculation, using spherical coordinates, shows that the following fourth-order tensor
%, denoted by $\mathbb{T}$,  
satisfies
\begin{equation}
\label{zzzz1}
%\mathbb{T}_{i j k l}(x) = 
\frac{30}{|B_\delta|}\int_{B_\delta(0)}
   \frac{z_i z_j z_k z_l}{|\bz|^4}\,d\bz=
\left\{
  \begin{array}{ll}
    6, & \mbox{ if } i=j=k=l,\\
    \\
    2, & \mbox{ if } (i=j,k=l, \mbox{ and } i\neq k) \\
     & \;\; \mbox{ or } (i=k,j=l, \mbox{ and } i\neq j)\\
      & \;\; \mbox{ or } (i=l,j=k, \mbox{ and } i\neq j),\\
    \\
    0, & \mbox{otherwise}.
  \end{array}
\right.
\end{equation}
Using \eqref{zzzz1} the $i$-th component of the second term in \eqref{Lz2} becomes
\begin{eqnarray}
\label{T2}
\nonumber
\sum_{j,k,l} \frac{30}{|B_\delta|}\int_{B_\delta(0)}
   \frac{z_i z_j z_k z_l}{|\bz|^4}\,d\bz\, \frac{1}{2} \frac{\partial^2 v_j}{\partial x_l\partial x_k} 
   &=& \frac{1}{2} \left( 6 \frac{\partial^2 v_i}{\partial x_i^2}+
   2 \sum_{k\neq i}\frac{\partial^2 v_i}{\partial x_k^2}+
   4 \sum_{j\neq i}\frac{\partial^2 v_j}{\partial x_i x_j}
   \right)\\
   \nonumber
   &=& \sum_{k} \frac{\partial^2 v_i}{\partial x_k^2} 
   + 2 \sum_{j} \frac{\partial^2 v_j}{\partial x_i x_j}  \\
   &=& \Delta v_i + 2 \left(\nabla(\Div \bv)\right)_i.
\end{eqnarray}
By combining \eqref{Lz2} with \eqref{zzz0}, \eqref{O_delta}, and \eqref{T2}, we conclude that
\begin{equation}
\label{lim_Lz_pt}
\limd (\Lz \bv)(\bx)= 2 \nabla(\Div\bv(\bx)) +\Delta\bv(\bx)
\end{equation}
for all $\bx$ in $\mathbb{R}^3$. Equation \eqref{lim_Lz} follows from the point-wise convergence result \eqref{lim_Lz_pt} and Lebesgue's dominated convergence theorem, completing the proof.
%\qed
\end{proof}

%Using Lemma \ref{lemma1} and Lebesgue's dominated convergence theorem, we arrive at the following result. 
%\begin{cor}
%\label{cor1}
%\[
%\limd \Lz\bv=2 \nabla(\Div\bv) +\Delta\bv\;\;\;\; \mbox{ in } L^p(\Omega)^3,
%\]
%for all $p\geq 1$.
%\end{cor}
The operator $\Lz: L^p(\Omega)^3\rightarrow L^p(\Omega)^3$ can also be defined to operate on scalar-fields
\[
(\Lz f)(\bx):=\frac{30}{|B_\delta|}\int_{B_\delta(0)}
   \frac{\bz
\otimes\bz}{|\bz|^4}\,\big( f(\bx+\bz)-f(\bx)\big)
  \,d\bz,
\]
in which case $f\in L^p(\Omega)\mapsto \Lz f\in L^p(\Omega)^{3\times 3}$. The convergence result in this case is given by the following lemma, whose proof is similar to that of Lemma \ref{lemma1}.
\begin{lem}
\label{lemma2}
If $f\in C^3(\Omega)$ then
\begin{equation}
\label{lim_Lz_scalar}
\limd \Lz f
=2 \nabla\nabla f +\Delta f \;I,\;\;\;\; \mbox{ in } L^p(\Omega)^{3\times 3}\end{equation}
%for all $p\geq 1$.
for $1\leq p<\infty$.
\end{lem}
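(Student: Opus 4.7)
The plan is to mirror the proof of Lemma~\ref{lemma1}, adapting it to the scalar case where the output is matrix-valued. First I would Taylor expand $f$ around the base point: for $\bx\in\Omega$ and $\bz\in B_\delta(0)$,
\[
 f(\bx+\bz)-f(\bx) = \nabla f(\bx)\cdot\bz + \tfrac{1}{2}\nabla\nabla f(\bx)\colon(\bz\otimes\bz) + r(f;\bx,\bz),
\]
with $r(f;\bx,\bz)=O(|\bz|^3)$ because $f\in C^3$. Substituting into the definition of $\Lz f$ gives three integrals, and I would handle them exactly as in Lemma~\ref{lemma1}.

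The first integral is
\[
 \frac{30}{|B_\delta|}\int_{B_\delta(0)}\frac{\bz\otimes\bz}{|\bz|^4}\,(\nabla f(\bx)\cdot\bz)\,d\bz,
\]
whose components involve odd moments $z_iz_jz_k/|\bz|^4$, and hence vanish by the symmetry identity \eqref{zzz0}. The remainder integral
\[
 \frac{30}{|B_\delta|}\int_{B_\delta(0)}\frac{\bz\otimes\bz}{|\bz|^4}\,r(f;\bx,\bz)\,d\bz
\]
is $\BigO{\delta}$ by the same estimate used to obtain \eqref{O_delta}, since $|r(f;\bx,\bz)|\leq M|\bz|^3$ for $f\in C^3$ with a bound $M$ uniform on compact subsets.

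The core of the argument is the middle integral, where I would invoke the already-computed fourth-moment identity \eqref{zzzz1}. Writing the $(i,j)$-component of the limit as
\[
 \sum_{k,l}\frac{30}{|B_\delta|}\int_{B_\delta(0)}\frac{z_iz_jz_kz_l}{|\bz|^4}\,d\bz\,\cdot\tfrac{1}{2}\partial_k\partial_l f(\bx),
\]
I would split into cases $i=j$ and $i\neq j$. For $i=j$, the nonzero contributions come from $k=l=i$ (weight $6$) and $k=l\neq i$ (weight $2$), producing $2\partial_i^2 f + \Delta f$, which matches $\bigl(2\nabla\nabla f + \Delta f\,I\bigr)_{ii}$. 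For $i\neq j$, the nonzero contributions come from $(k,l)=(i,j)$ and $(k,l)=(j,i)$ (each of weight $2$), yielding $2\partial_i\partial_j f = \bigl(2\nabla\nabla f + \Delta f\,I\bigr)_{ij}$. This establishes the pointwise limit
\[
 \limd(\Lz f)(\bx) = 2\nabla\nabla f(\bx) + \Delta f(\bx)\,I.
\]

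Finally I would upgrade this to $L^p(\Omega)^{3\times 3}$ convergence via Lebesgue's dominated convergence theorem, using a uniform bound on $|\Lz f(\bx)|$ obtained from the Taylor estimates (the integrand is controlled by $\|f\|_{C^2}$ on a neighborhood of $\Omega$). I do not anticipate a genuine obstacle here; the main subtlety is purely notational, namely tracking the four free indices carefully so that one recognises the scalar-input pattern produces the trace-plus-Hessian structure $2\nabla\nabla f + \Delta f\,I$ in place of the vector-input pattern $2\nabla(\Div\bv)+\Delta\bv$ from Lemma~\ref{lemma1}.
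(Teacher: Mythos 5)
Your proof is correct and follows exactly the route the paper intends: the paper omits the argument, stating only that it is ``similar to that of Lemma~\ref{lemma1},'' and your adaptation — odd-moment vanishing for the gradient term, the fourth-moment identity \eqref{zzzz1} yielding $2\nabla\nabla f+\Delta f\,I$ via the $i=j$ and $i\neq j$ case split, the $\BigO{\delta}$ remainder bound, and dominated convergence for the $L^p$ upgrade — is precisely that adaptation. The index bookkeeping in the middle integral checks out.
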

We use Lemma~\ref{lemma1} and Lemma~\ref{lemma2} to show the following convergence result for the operator $\Ls$ defined in \eqref{Ldel_s_2}.
\begin{prop}
\label{prop1}
Assume that the vector field $\bv$ is in $C^3(\Omega)^3$ and the shear modulus $\mu$ is in $C^3(\Omega)$. Then as $\delta \rightarrow 0$,
\begin{equation}
\label{lim_Ls}
\Ls \bv\longrightarrow \Ns\bv,\;\;\;\; \mbox{ in } L^p(\Omega)^3
\end{equation}
%for all $p\geq 1$.
for $1\leq p<\infty$.
\end{prop}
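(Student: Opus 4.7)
The plan is to decompose the coefficient $\mu(\bx)+\mu(\by)$ in~\eqref{Ldel_s_2} as $\mu(\bx)+\mu(\by) = 2\mu(\bx) + \bigl(\mu(\by)-\mu(\bx)\bigr)$. This splits $\Ls\bv$ as
\[
(\Ls\bv)(\bx) = \mu(\bx)\,(\Lz\bv)(\bx) + (R_\delta\bv)(\bx),
\]
where
\[
(R_\delta\bv)(\bx) = \frac{15}{|B_\delta|}\int_{B_\delta(\bx)}\bigl(\mu(\by)-\mu(\bx)\bigr)\frac{(\by-\bx)\otimes(\by-\bx)}{|\by-\bx|^4}\bigl(\bv(\by)-\bv(\bx)\bigr)\,d\by.
\]
The first term is handled by Lemma~\ref{lemma1}: since $\mu\in C^3(\Omega)$ is locally bounded, multiplication by $\mu$ preserves the $L^p$ convergence of Lemma~\ref{lemma1}, giving $\mu\,\Lz\bv\to \mu\bigl(2\nabla(\Div\bv)+\Delta\bv\bigr)$ in $L^p(\Omega)^3$.

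The bulk of the work lies in analyzing $R_\delta\bv$. After the change of variables $\bz=\by-\bx$, I would substitute the first-order Taylor expansions
\[
\mu(\bx+\bz)-\mu(\bx) = \nabla\mu(\bx)\cdot\bz + O(|\bz|^2), \qquad \bv(\bx+\bz)-\bv(\bx) = \nabla\bv(\bx)\bz + O(|\bz|^2),
\]
both justified by $\mu\in C^3$ and $\bv\in C^3(\Omega)^3$. The leading contribution to $(R_\delta\bv)(\bx)$ is then
\[
\frac{15}{|B_\delta|}\int_{B_\delta(0)}\frac{\bz\otimes\bz}{|\bz|^4}\bigl(\nabla\mu(\bx)\cdot\bz\bigr)\bigl(\nabla\bv(\bx)\bz\bigr)\,d\bz,
\]
which is a contraction of $\nabla\mu(\bx)\otimes\nabla\bv(\bx)$ against exactly half of the fourth-order moment already computed in~\eqref{zzzz1}. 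Repeating the index bookkeeping of~\eqref{T2} would then yield pointwise convergence of this term to $\nabla\mu\,\Div\bv + (\nabla\bv+\nabla\bv^T)\nabla\mu$. The remaining cross terms and the product of Taylor remainders produce integrands bounded by $C|\bz|^{-1}$ or better, so their integrals are $O(\delta)$ by precisely the estimate in~\eqref{O_delta}. Dominated convergence, using the uniform bounds on $\mu,\nabla\mu$ and the first two derivatives of $\bv$, then upgrades the pointwise convergence to $L^p(\Omega)^3$ convergence.

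Combining the two limits gives
\[
\limd \Ls\bv = \mu\,\Delta\bv + 2\mu\,\nabla(\Div\bv) + (\Div\bv)\nabla\mu + (\nabla\bv+\nabla\bv^T)\nabla\mu,
\]
and a direct product-rule expansion of $\Ns\bv = \nabla(\mu\,\Div\bv) + \Div\bigl(\mu(\nabla\bv+\nabla\bv^T)\bigr)$ produces the identical right-hand side, closing the proof. The only genuinely delicate step is the index-level evaluation of the fourth-order tensor integral in the leading term of $R_\delta\bv$ together with the verification that the resulting expression reproduces the symmetric-gradient part of $\Ns\bv$; everything else is a direct reuse of the remainder estimate and the dominated convergence argument already used for Lemma~\ref{lemma1}.
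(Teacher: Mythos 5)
Your proof is correct, but it follows a genuinely different route from the paper's. The paper splits $\mu(\bx)+\mu(\by)$ into $\mu(\bx)$ plus $\mu(\by)$, i.e.\ $\Ls=\Ldel_{s1}+\Ldel_{s2}$ with $\Ldel_{s1}=\tfrac12\mu\,\Lz$, and then handles $\Ldel_{s2}$ by the algebraic identity $\mu(\bx+\bz)\bigl(\bv(\bx+\bz)-\bv(\bx)\bigr)=\bigl(\mu\bv(\bx+\bz)-\mu\bv(\bx)\bigr)-\bv(\bx)\bigl(\mu(\bx+\bz)-\mu(\bx)\bigr)$, which lets it apply Lemma~\ref{lemma1} to the product $\mu\bv$ and Lemma~\ref{lemma2} to the scalar $\mu$; the limit then emerges as $\tfrac12\bigl(2\nabla(\Div(\mu\bv))+\Delta(\mu\bv)-(2\nabla\nabla\mu+\Delta\mu\,I)\bv\bigr)$ plus the $\Ldel_{s1}$ contribution, and a final product-rule computation \eqref{Ls_convg_2} reduces this to $\Ns\bv$. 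You instead write $\mu(\bx)+\mu(\by)=2\mu(\bx)+\bigl(\mu(\by)-\mu(\bx)\bigr)$, so your second piece carries the product of two first-order increments and you evaluate its leading term directly by contracting $\nabla\mu\otimes\nabla\bv$ against the fourth-order moment \eqref{zzzz1}; this yields $(\Div\bv)\nabla\mu+(\nabla\bv+\nabla\bv^T)\nabla\mu$, which together with $\mu(2\nabla(\Div\bv)+\Delta\bv)$ is exactly $\Ns\bv$. Your decomposition buys a shorter path: it bypasses Lemma~\ref{lemma2} and the second-derivative cancellations between $\Delta(\mu\bv)$, $\nabla(\Div(\mu\bv))$ and $(2\nabla\nabla\mu+\Delta\mu\,I)\bv$, at the cost of one new (but routine) moment contraction; the paper's version buys maximal reuse of the two lemmas as black boxes. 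One slip in your write-up: the claim that the cross terms have integrands ``bounded by $C|\bz|^{-1}$ or better, so their integrals are $O(\delta)$'' is not a valid implication --- an integrand of size $|\bz|^{-1}$ gives $\frac{1}{|B_\delta|}\int_{B_\delta(0)}|\bz|^{-1}d\bz=\BigO{\delta^{-1}}$, which diverges. What is actually true, and what you need, is that each cross term (one first-order factor times one $\BigO{|\bz|^2}$ remainder, against the $\BigO{|\bz|^{-2}}$ kernel) is bounded by $C|\bz|$, so that \eqref{O_delta} applies verbatim and the term is $\BigO{\delta}$; with that correction the argument is complete.
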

\begin{proof}
The operator $\Ls$ in \eqref{Ldel_s_2}, after the change of variables $\bz=\by-\bx$, becomes
\begin{eqnarray}
\label{Ls3}
(\Ls\bv)(\bx)=\frac{15}{|B_\delta|}\int_{B_\delta(0)}
  \left(\mu(\bx)+\mu(\bx+\bz)\right) \frac{\bz
\otimes\bz}{|\bz|^4}\,\big( \bv(\bx+\bz)-\bv(\bx)\big)\,d\bz.
\end{eqnarray}
We decompose the operator $\Ls$ as $\Ls=\Ldel_{s1}+\Ldel_{s2}$, where
\begin{eqnarray}
\label{Ls1}
(\Ldel_{s1}\bv)(\bx)&=&\frac{1}{2}\mu(\bx)\;\frac{30}{|B_\delta|}\int_{B_\delta(0)}
   \frac{\bz
\otimes\bz}{|\bz|^4}\,\big( \bv(\bx+\bz)-\bv(\bx)\big)\,d\bz,\\
\label{Ls2}
(\Ldel_{s2}\bv)(\bx)&=&\frac{1}{2}\;\frac{30}{|B_\delta|}\int_{B_\delta(0)}
 \mu(\bx+\bz)\;  \frac{\bz
\otimes\bz}{|\bz|^4}\,\big( \bv(\bx+\bz)-\bv(\bx)\big)\,d\bz,
\end{eqnarray}
Using Lemma~\ref{lemma1} we find that, as $\delta\rightarrow 0$
\begin{equation}
\label{Ls1_2}
\Ldel_{s1}\bv\longrightarrow \frac{1}{2}\, \mu \left( \frac{}{}
2 \nabla(\Div\bv) +\Delta\bv\right),\;\;\;\; \mbox{ in } L^p(\Omega)^3.
\end{equation}
The integral in \eqref{Ls2} can be written as
\begin{eqnarray}
\label{Ls2_2}
\nonumber
(\Ldel_{s2}\bv)(\bx)&=&\frac{1}{2}\;\frac{30}{|B_\delta|}\int_{B_\delta(0)}
  \frac{\bz
\otimes\bz}{|\bz|^4}\,\big(\mu(\bx+\bz) \bv(\bx+\bz)-\mu(\bx) \bv(\bx)\big)\,d\bz\\
&& - \frac{1}{2}\;\frac{30}{|B_\delta|} \bv(\bx) \int_{B_\delta(0)}
  \frac{\bz
\otimes\bz}{|\bz|^4}\,\big(\mu(\bx+\bz)-\mu(\bx) \big)\,d\bz.
\end{eqnarray}
By applying Lemma~\ref{lemma1} on the first term of the right hand side of \eqref{Ls2_2} and Lemma~\ref{lemma2} on the second term  we find that, as $\delta\rightarrow 0$
\begin{equation}
\label{Ls2_convg}
\Ldel_{s2}\bv\longrightarrow \frac{1}{2} \left( \frac{}{}
2 \nabla(\Div(\mu\bv)) +\Delta(\mu\bv)\right) - 
\frac{1}{2} \left( \frac{}{}
\left(2 \nabla\nabla \mu  +\Delta\mu) \,I\right)\bv\right),
\end{equation}
in $L^p(\Omega)^3$. From \eqref{Ls1_2} and \eqref{Ls2_convg}, we obtain the following convergence in $L^p(\Omega)^3$
\begin{eqnarray}
\label{Ls_convg}
%\nonumber
\limd\Ldel_{s}\bv &=&\frac{1}{2}\left( \frac{}{}
2\mu\nabla(\Div\bv)+\mu\Delta\bv+
2 \nabla(\Div(\mu\bv)) +\Delta(\mu\bv) - 
2 (\nabla\nabla \mu)\bv  -(\Delta\mu)\bv
\right).
\end{eqnarray}
Expanding $\Delta(\mu\bv)$ and $\nabla(\Div(\mu\bv))$ in the right hand side of \eqref{Ls_convg}, using the identities
\begin{eqnarray}
\label{calc_identities}
\nonumber
\Delta(\mu\bv) &=& \mu \Delta\bv+2\nabla\bv\nabla\mu+(\Delta\mu)\;\bv,\\
\nonumber
\nabla(\Div(\mu\bv)) &=& (\nabla\nabla\mu)^T\bv+(\nabla\bv)^T \nabla\mu+\mu\nabla(\Div\bv)+(\Div\bv)\nabla\mu,
\end{eqnarray}
and then simplifying, one finds that
\begin{eqnarray}
\label{Ls_convg_2}
\nonumber
&&\frac{1}{2}\left( \frac{}{}
2\mu\nabla(\Div\bv)+\mu\Delta\bv+
2 \nabla(\Div(\mu\bv)) +\Delta(\mu\bv) - 
2 (\nabla\nabla \mu)\bv  -(\Delta\mu)\bv
\right)\\
\nonumber
&&\;\;\;\;\;\;\;= (\Div\bv)\nabla\mu+2\mu\nabla(\Div\bv)+\mu\Delta\bv+\nabla\bv\nabla\mu+(\nabla\bv)^T\nabla\mu\\
%\nonumber
&&\;\;\;\;\;\;\;= \nabla\left(\mu\Div\bv\right) + 
\Div\left(\mu(\left(\nabla\bv+(\nabla\bv)^T\right)\right).%\\
%&&\;\;\;\;\;\;\;=\Ns\bv.
\end{eqnarray}
Finally, equation \eqref{lim_Ls} follows from \eqref{Ns}, \eqref{Ls_convg},
and \eqref{Ls_convg_2}.
%\qed 
\end{proof}
In the next result we consider the convergence of the operator $\Ld$ defined in \eqref{Ldel_d_3}. 
\begin{prop}
\label{prop2}
Assume that the vector field $\bv$ is in $C^3(\Omega)^3$ and that the material properties
$\mu$ and $\lambda$ are in $C^2(\Omega)$. Then as $\delta \rightarrow 0$,
\begin{equation}
\label{lim_Ld}
\Ld \bv\longrightarrow \Nd\bv,\;\;\;\; \mbox{ in } L^p(\Omega)^3
\end{equation}
%for all $p\geq 1$.
for $1\leq p<\infty$.
\end{prop}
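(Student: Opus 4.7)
The plan is to parallel the Taylor-expansion argument of Lemma~\ref{lemma1} and Proposition~\ref{prop1}, adapted to the double integral defining $\Ld$ in \eqref{Ldel_d_3}. First I would change variables $\bzeta=\by-\bx$ and $\bxi=\bz-\by$, so that
\[
(\Ld\bv)(\bx) = \frac{9}{|B_\delta|^2}\int_{B_\delta(0)}\int_{B_\delta(0)} (\lambda-\mu)(\bx+\bzeta)\,\frac{\bzeta}{|\bzeta|^2}\otimes\frac{\bxi}{|\bxi|^2}\,\bv(\bx+\bzeta+\bxi)\,d\bxi\,d\bzeta,
\]
and then evaluate the two integrals sequentially.

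For the inner integral in $\bxi$, I would Taylor expand $\bv(\bx+\bzeta+\bxi)$ to second order about the point $\bx+\bzeta$. After contracting with $\bxi/|\bxi|^2$, the zeroth- and second-order terms produce integrands odd in $\bxi$ and so vanish by the symmetry of $B_\delta(0)$; the linear term, via the identity $\int_{B_\delta(0)}\bxi_i\bxi_j/|\bxi|^2\,d\bxi = \frac{|B_\delta|}{3}\delta_{ij}$, yields $\frac{|B_\delta|}{3}\Div\bv(\bx+\bzeta)$; and the third-derivative remainder is bounded uniformly by a constant times $|\bxi|^2$ after division by $|\bxi|^2$, so it integrates to $\BigO{\delta^5}$. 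Substituting back, one obtains
\[
(\Ld\bv)(\bx) = \frac{3}{|B_\delta|}\int_{B_\delta(0)} g(\bx+\bzeta)\,\frac{\bzeta}{|\bzeta|^2}\,d\bzeta + \BigO{\delta},
\]
where $g:=(\lambda-\mu)\Div\bv$ lies in $C^2(\Omega)$ by the regularity hypotheses on $\bv$, $\lambda$, and $\mu$.

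For the outer integral I would Taylor expand $g(\bx+\bzeta)=g(\bx)+\nabla g(\bx)\cdot\bzeta+R(\bx,\bzeta)$ with $|R|\leq C|\bzeta|^2$. The constant term contributes zero by the oddness of $\bzeta/|\bzeta|^2$; the linear term gives $\frac{|B_\delta|}{3}\nabla g(\bx)$ by the same second-moment identity applied in the $\bzeta$ variable; and the remainder integrates to $\BigO{\delta^4}$. Combining the prefactor $3/|B_\delta|$ with $|B_\delta|/3$, the leading term evaluates to $\nabla g(\bx)=\nabla((\lambda-\mu)\Div\bv)(\bx)=(\Nd\bv)(\bx)$, establishing pointwise convergence. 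Dominated convergence then upgrades this to $L^p(\Omega)^3$ convergence, exactly as at the end of the proof of Lemma~\ref{lemma1}.

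The main obstacle is the bookkeeping: there are two Taylor remainders to control plus the fact that the reduction \eqref{Ldel_d_3} is only valid for $\bx$ at distance at least $2\delta$ from $\partial\Omega$, so in the $2\delta$ boundary layer one must instead work from the full form \eqref{Ldel_d_2}. This layer has Lebesgue measure $\BigO{\delta}$ and the integrand is uniformly bounded under the smoothness hypotheses, so its $L^p$ contribution is $\BigO{\delta^{1/p}}$ and vanishes in the limit. Beyond this, the argument is routine; the key structural observation is that the iterated symmetry structure of the double integral uses the second-moment identity twice, in contrast to Lemma~\ref{lemma1} where a single fourth-moment identity was required.
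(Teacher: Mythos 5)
Your proposal is correct and follows essentially the same route as the paper's proof: the same iterated change of variables, the inner Taylor expansion reducing the $\bxi$-integral to $\frac{|B_\delta|}{3}\Div\bv(\bx+\bzeta)$ via the second-moment identity \eqref{identity_identity}, the outer expansion of $(\lambda-\mu)\Div\bv$ yielding $\nabla((\lambda-\mu)\Div\bv)(\bx)$, and dominated convergence for the $L^p$ statement. Your explicit treatment of the $2\delta$ boundary layer is a small point of added care that the paper handles only implicitly by restricting to interior points.
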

\begin{proof}
Let $\bx$ be a point in the interior of $\Omega$ and  $c=\lambda-\mu$. Then by changing variables ($\bw=\bz-\by$ then $\bh=\by-\bx$) in \eqref{Ldel_d_3}, $\Ld\bv$ can be written as
\begin{eqnarray}
\label{Ld}
\nonumber
(\Ld \bv)(\bx)&=&\frac{9}{|B_\delta|^2}\int_{B_\delta(0)}\int_{B_\delta(0)}
 c(\bx+\bh) \;\frac{\bh\otimes\bw}{|\bh|^2 |\bw|^2}
\,\bv(\bx+\bh+\bw)\,d\bw\; d\bh\\
&=&\frac{9}{|B_\delta|^2}\int_{B_\delta(0)}
 c(\bx+\bh) \;\frac{\bh}{|\bh|^2}\int_{B_\delta(0)}\frac{\bw}{|\bw|^2} \cdot
\,\bv(\bx+\bh+\bw)\,d\bw \,d\bh.
 \end{eqnarray}
The Taylor expansion of $\bv$ about $\bw=\bx+\bh$ is given by
\begin{equation}
\label{Ld_taylor1}
\bv(\bx+\bh+\bw)=\bv(\bx+\bh)+\nabla\bv(\bx+\bh) \bw+\frac{1}{2} \nabla\nabla\bv(\bx+\bh)\, (\bw\otimes\bw)+\br_1(\bv;\bx+\bh,\bw),
\end{equation}
where
\begin{equation}
\label{Ld_remainder1}
\br_1(\bv;\bx+\bh,\bw)=\frac{1}{3!} \nabla\nabla\nabla\bv(\bxi)\, (\bw\otimes\bw\otimes\bw)
\end{equation}
for some $\bxi$ on the line segment joining $\bx+\bh$ and $\bw$. By inserting \eqref{Ld_taylor1} in the inner integral of \eqref{Ld}, expanding the integral, and then rearranging the tensor products, we find
\begin{eqnarray}
\label{Ld_inner_1}
\nonumber
\int_{B_\delta(0)}\frac{\bw}{|\bw|^2}\cdot\,\bv(\bx+\bh+\bw)\,d\bw &=& 
\int_{B_\delta(0)}\frac{\bw}{|\bw|^2}\,d\bw\cdot \bv(\bx+\bh)+
\int_{B_\delta(0)}\frac{\bw\otimes\bw}{|\bw|^2}\,d\bw\colon \nabla\bv(\bx+\bh)\\
\nonumber
&&\hspace{-2cm} +
\int_{B_\delta(0)}\frac{\bw\otimes\bw\otimes\bw}{|\bw|^2}\,d\bw\colon \frac{1}{2}\nabla\nabla\bv(\bx+\bh)
+ \int_{B_\delta(0)}\frac{\bw}{|\bw|^2}\cdot\,\br(\bv;\bx+\bh,\bw)\,d\bw.\\
\end{eqnarray}
We note that due to symmetry, the integrals in the first and third terms of the right hand side of \eqref{Ld_inner_1} are identical to zero. 
A straightforward calculation shows that
\begin{equation}
\label{identity_identity}
\int_{B_\delta(0)}\frac{\bw\otimes\bw}{|\bw|^2}\,d\bw = \frac{|B_\delta|}{3} \,I,
\end{equation}
and thus \eqref{Ld_inner_1} is equivalent to
\begin{eqnarray}
\label{Ld_inner_2}
\nonumber
\int_{B_\delta(0)}\frac{\bw}{|\bw|^2}\cdot\,\bv(\bx+\bh+\bw)\,d\bw &=& 
\frac{|B_\delta|}{3} \; \Div\bv(\bx+\bh)
+ \int_{B_\delta(0)}\frac{\bw}{|\bw|^2}\cdot\,\br(\bv;\bx+\bh,\bw)\,d\bw.\\
\end{eqnarray}
Substituting \eqref{Ld_inner_2} in \eqref{Ld} one finds that
\begin{eqnarray}
\label{Ld_taylor2}
\nonumber
(\Ld \bv)(\bx)&=&
\frac{3}{|B_\delta|}\int_{B_\delta(0)}c(\bx+\bh)\Div\bv(\bx+\bh)\frac{\bh}{|\bh|^2}\,d\bh\\
&& +\frac{9}{|B_\delta|^2}\int_{B_\delta(0)}c(\bx+\bh)\;\frac{\bh}{|\bh|^2}
\int_{B_\delta(0)}\frac{\bw}{|\bw|^2} \cdot
\,\br(\bv;\bx+\bh,\bw)\,d\bw \,d\bh.
\end{eqnarray}
Using \eqref{Ld_remainder1} we obtain that for some $K>0$,
\begin{eqnarray}
\label{Ld_estimate}
\nonumber
\left| 
\frac{9}{|B_\delta|^2}\int_{B_\delta(0)}c(\bx+\bh)\;\frac{\bh}{|\bh|^2}
\int_{B_\delta(0)}\frac{\bw}{|\bw|^2} \cdot
\,\br(\bv;\bx+\bh,\bw)\,d\bw \,d\bh
\right| &\leq& K \frac{9}{|B_\delta|^2} \int_{B_\delta(0)}\frac{1}{|\bh|}\,d\bh \int_{B_\delta(0)}|\bw|^2 \,d\bw\\
&=& \BigO{\delta}, 
\end{eqnarray}
where we used the facts that $\int_{B_\delta(0)}\frac{1}{|\bh|}\,d\bh=\BigO{\delta^2}$ and 
$\int_{B_\delta(0)}|\bw|^2 \,d\bw =\BigO{\delta^5}$. Therefore, the second term in the right hand side of \eqref{Ld_taylor2} vanishes in the limit as $\delta\rightarrow 0$.
%, and
%\begin{equation}
%\label{lim_Ld1}
%\limd (\Ld \bv)(\bx) =  \limd \frac{3}{|B_\delta|}\int_{B_\delta(0)}c(\bx+\bh)\Div\bv(\bx+\bh)\frac{\bh}{|\bh|^2}\,d\bh
%\end{equation}
For the first term in the right hand side of \eqref{Ld_taylor2}, we first expand $c\Div\bv$ in a Taylor series about $\bh=\bx$
\begin{equation}
\label{Ld_taylor3}
\left(c \Div\bv\right)(\bx+\bh)=\left(c \Div\bv\right)(\bx)+\nabla\left(c \Div\bv\right)(\bx)\cdot \bh+\br_2(\bv;\bx,\bh),
\end{equation}
where
\begin{equation}
\label{Ld_remainder2}
\br_2(\bv;\bx,\bh)=\frac{1}{2} \nabla\nabla\left(c \Div\bv\right)(\bxi)\colon (\bh\otimes\bh)
\end{equation}
for some $\bxi$ on the line segment joining $\bx$ and $\bh$. Then by substituting \eqref{Ld_taylor3} in the first term in the right hand side of \eqref{Ld_taylor2}, we find
\begin{eqnarray}
\label{Ld_taylor2_first}
\nonumber
\frac{3}{B_\delta}\int_{B_\delta(0)}c(\bx+\bh) \Div\bv(\bx+\bh)\; \frac{\bh}{|\bh|^2}\,d\bh &=& 
\frac{3}{B_\delta}\int_{B_\delta(0)} \frac{\bh}{|\bh|^2}\,d\bh\,\, c(\bx) \Div\bv(\bx)\\
\nonumber
&&
%\hspace{-1cm} 
\nonumber
+
\frac{3}{B_\delta}\int_{B_\delta(0)} \frac{\bh\otimes\bh}{|\bh|^2}\,d\bh\, \nabla(c\Div\bv)(\bx)
+\frac{3}{B_\delta}\int_{B_\delta(0)} \br_2\frac{\bh}{|\bh|^2}\,d\bh \\
\nonumber
\\
&=& \nabla(c\Div\bv)(\bx) + \BigO{\delta}.
\end{eqnarray}
We note that, in order to obtain \eqref{Ld_taylor2_first} we used \eqref{identity_identity}, the identity
\begin{equation}
\label{zero_integral_1}
\int_{B_\delta(0)}\frac{\bh}{|\bh|^2}\,d\bh = 0,
\end{equation}
and the estimate
\begin{eqnarray}
\label{Ld_estimate2}
\nonumber
\left| 
\frac{3}{B_\delta}\int_{B_\delta(0)}\br_2(\bv;\bx,\bh)\;\frac{\bh}{|\bh|^2} \,d\bh\right| 
&\leq& M \frac{3}{B_\delta}\int_{B_\delta(0)} |\bh|\,d\bh \\
&=& \BigO{\delta} 
\end{eqnarray}
for some $M>0$.
By substituting \eqref{Ld_taylor2_first} in \eqref{Ld_taylor2} and using \eqref{Ld_estimate}, one finds
\begin{equation}
\label{Ld_lim2}
\limd (\Ld \bv)(\bx) = \nabla(c\Div\bv)(\bx).
\end{equation}
The result follows from \eqref{Ld_lim2} and  Lebesgue's dominated convergence theorem. 
%\qed 
\end{proof}
We conclude this section by the following result which follows from combining Propositions \ref{prop1} and \ref{prop2}.
%%%this
\begin{thm}
\label{thm_convg}
Assume that the vector field $\bv$ is in $C^3(\Omega)^3$ and that the material properties
$\mu$ and $\lambda$ are in $C^3(\Omega)$. Then 
\begin{equation}
\label{lim_Ldel_2}
\limd(\Ldel \bv)(\bx)= (\N\bv)(\bx),\;\;\;\; \bx\in\mathring{\Omega}.
\end{equation}
Moreover, as $\delta \rightarrow 0$,
\begin{equation}
\label{lim_Ldel}
\Ldel \bv\longrightarrow \N\bv\;\;\;\; \mbox{ in } L^p(\Omega)^3,
\end{equation}
%for all $p\geq 1$.
for $1\leq p<\infty$.
\end{thm}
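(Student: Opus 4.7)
My plan is to derive Theorem~\ref{thm_convg} by directly splicing together the two convergence results already established, namely Proposition~\ref{prop1} for $\Ls\to\Ns$ and Proposition~\ref{prop2} for $\Ld\to\Nd$. The key observation is that the decompositions $\Ldel=\Ls+\Ld$ from \eqref{Ldel0} and $\N=\Ns+\Nd$ from \eqref{N_decomp} are linear, so once both summands converge in the same topology, the sums converge to the sums.

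First, I would verify the hypotheses match. Proposition~\ref{prop1} requires $\bv\in C^3(\Omega)^3$ and $\mu\in C^3(\Omega)$, while Proposition~\ref{prop2} requires $\bv\in C^3(\Omega)^3$ together with $\mu,\lambda\in C^2(\Omega)$. The hypotheses of Theorem~\ref{thm_convg} assume $\mu,\lambda\in C^3(\Omega)\subset C^2(\Omega)$ and $\bv\in C^3(\Omega)^3$, so both propositions are applicable simultaneously.

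Next, for the $L^p$-convergence \eqref{lim_Ldel}, I would write, for $1\le p<\infty$,
\begin{equation*}
\|\Ldel\bv-\N\bv\|_{L^p(\Omega)^3}\le \|\Ls\bv-\Ns\bv\|_{L^p(\Omega)^3}+\|\Ld\bv-\Nd\bv\|_{L^p(\Omega)^3},
\end{equation*}
and apply \eqref{lim_Ls} and \eqref{lim_Ld} to conclude that the right-hand side tends to zero as $\delta\to 0$. For the pointwise statement \eqref{lim_Ldel_2} on the interior $\mathring{\Omega}$, I would observe that the proofs of Propositions~\ref{prop1} and \ref{prop2} actually establish pointwise convergence at every interior point before invoking Lebesgue's dominated convergence theorem to upgrade to $L^p$; in particular, equation \eqref{Ld_lim2} and the analogous pointwise step in the proof of Proposition~\ref{prop1} (obtained by combining \eqref{Ls1_2} and \eqref{Ls2_convg} pointwise via Lemmas~\ref{lemma1} and \ref{lemma2}) give
\begin{equation*}
\limd(\Ls\bv)(\bx)=(\Ns\bv)(\bx),\qquad \limd(\Ld\bv)(\bx)=(\Nd\bv)(\bx)
\end{equation*}
for every $\bx$ such that $B_{2\delta}(\bx)\subset\Omega$ (the latter being needed so that formula \eqref{Ldel_d_3} applies and boundary effects do not enter). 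Adding the two identities yields \eqref{lim_Ldel_2}.

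There is no real obstacle here; the only mild care required is making sure the pointwise statement is restricted to $\mathring{\Omega}$ so that the simplified form \eqref{Ldel_d_3} of $\Ld$ (obtained via symmetry from \eqref{Ldel_d_2}) can be used, since $\Ldel_d$ has the simplified expression only for points at distance at least $2\delta$ from $\partial\Omega$. Since we are taking $\delta\to 0$, every interior point eventually satisfies this condition, so the pointwise limit is well defined on all of $\mathring{\Omega}$. The proof then amounts to a two-line citation of Propositions~\ref{prop1} and \ref{prop2} followed by the triangle inequality.
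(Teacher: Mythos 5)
Your proposal is correct and is essentially identical to the paper's own argument: the paper states Theorem~\ref{thm_convg} as an immediate consequence of combining Propositions~\ref{prop1} and~\ref{prop2} via the decompositions $\Ldel=\Ls+\Ld$ and $\N=\Ns+\Nd$. Your additional care about the hypotheses and about restricting the pointwise statement to $\mathring{\Omega}$ (so that the simplified form \eqref{Ldel_d_3} applies for small enough $\delta$) only makes explicit what the paper leaves implicit.
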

\begin{remark}
The regularity assumptions on the vector field $\bv$ and the material properties $\mu$ and $\lambda$ in 
Theorem~\ref{thm_convg} as well as in the other results in this section can be relaxed. However, in Section~\ref{nonconvg_sec}, we 
show that the material properties must at least be continuous for the convergence of peridynamics to elasticity results to hold.  
\end{remark}

%%%%%%%%%%%%%%%%%%%%%%%%%%%%%%%%%%%%%%%%%%%%%%%%%%%%%%%%%%%%%%%%%%%%%%%%%%%%%%%%%%%%%%%%%%%%%%%%%%%%%%%%%
%%%%%%%%%%%        Section 4       Non Convergence      %%%%%%%%%%%%%%%%%%%%%%%%%%%%%%%%%%%%%%%%%%%%%%%%%
%%%%%%%%%%%%%%%%%%%%%%%%%%%%%%%%%%%%%%%%%%%%%%%%%%%%%%%%%%%%%%%%%%%%%%%%%%%%%%%%%%%%%%%%%%%%%%%%%%%%%%%%%
\section{Non-Convergence of Peridynamics at Interfaces}
\label{nonconvg_sec}
In this section we show that continuity of the material properties is a necessary condition for the convergence of 
linear peridynamics to linear elasticity as described in Theorem \ref{thm_convg}. 
%We also define natural peridynamics nonlocal interface conditions and discuss their local limit.

Let $\Gamma$ be an interface separating different phases inside a heterogeneous medium occupying the region  $\Omega$, 
as illustrated in Figure \ref{fig_gamma}. 
We assume that the surface $\Gamma$ is $C^1$. In this case, the material properties $\lambda$ and $\mu$ have jump 
discontinuities at the interface. To simplify the presentation, we assume that the medium is a two-phase composite with 
$\Omega=\Omega_+\cup\Omega_-\cup\Gamma$, where $\Omega_+$ and $\Omega_-$ are two open disjoint sets, and the material 
properties are piecewise constants, which are given by
\begin{equation}
\label{lambda}
\lambda(\bx)=
\left\{
  \begin{array}{ll}
    \lambda_+,\; \bx\in\Omega_+\cup\Gamma\\
    \lambda_-,\; \bx\in\Omega_-
  \end{array}
\right.,\;\;\;\;\;\;\;\;\;\;\;
\mu(\bx)=
\left\{
  \begin{array}{ll}
    \mu_+,\; \bx\in\Omega_+\cup\Gamma\\
    \mu_-,\; \bx\in\Omega_-
  \end{array}
\right.
\end{equation}
%In what follows, we will denote by $\Bp{\bx}$ and $\Bm{\bx}$ the two parts of the ball $B_\delta(\bx)$, with $\bx\in \Gamma$, on each side of $\Gamma$.
In the remaining part of this article, we will use the following notations. Given a point $\bx_0\in\Gamma$, let $\bn(\bx_0)$ be the unit
normal to the interface at  $\bx_0$. We suppose that $\bn$ is directed outward from the $-$side
of the interface, pointing toward the $+$ side, as illustrated in Figure \ref{fig_gamma}. For a scalar , vector, or, tensor field $F$, we define
\begin{eqnarray*}
F(\bx_0^+)&:=& \lim_{\by\rightarrow \bx_0, \by\in\Omega_+} F(\by), \\
F(\bx_0^-)&:=& \lim_{\by\rightarrow \bx_0, \by\in\Omega_-} F(\by).
%\Bp{\bx}&=&\{\by\in B_\delta(\bx) | \mu(\by)=\mu_+ \mbox{ and } \lambda(\by)=\lambda_+\}\\
%\Bm{\bx}&=&\{\by\in B_\delta(\bx) | \mu(\by)=\mu_- \mbox{ and } \lambda(\by)=\lambda_-\}
\end{eqnarray*}
%Furthermore, we define
In addition, we define
\begin{eqnarray*}
\label{Bpm}
\Bp{\bx_0}&:=& B_\delta(\bx_0)\cap\Omega_+\cap\Gamma,\\
\Bm{\bx_0}&:=& B_\delta(\bx_0)\cap\Omega_-.
\end{eqnarray*}
Note that the sets $\Bp{\bx_0}$ and $\Bm{\bx_0}$ depend on the normal $\bn$. Furthermore, we use the following notation to denote
the jump in $F$ across the interface
\[
 \left[F\right]^{+}_{-}:=F(\bx^+)-F(\bx^-),\;\;\; \bx\in\Gamma
\]

\begin{figure}[t]
\centering
    \includegraphics[width=.3\textwidth]{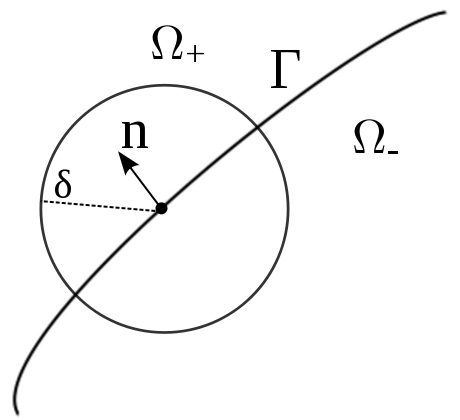}
\caption{\label{fig_gamma} The interface $\Gamma$ separates the two phases $\Omega_+$ and $\Omega_-$.}   
\end{figure}

%The following result concerns t
The behavior of the operator $\Ls$ at material interfaces, in the local limit, is described by the following result.
\begin{lem}
\label{nonconvg_1}
 Assume that the shear modulus $\mu$ is given by \eqref{lambda} and that the vector field $\bv$ 
is continuous on $\Omega$ and smooth on $\Omega\setminus\Gamma$.
%is sufficiently differentiable except possibly at the interface $\Gamma$. 
Then for $\bx\in\Gamma$,
\[
 \limd \left(\Ls\bv\right)(\bx) \mbox{ does not exist}.
\]
Moreover, the sequence $\left(\Ls\bv\right)_\delta$ is unbounded in $L^p(\Omega)$, with $1\leq p<\infty$.
\end{lem}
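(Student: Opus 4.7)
The plan is to expose the divergence of $(\Ls\bv)(\bx)$ for $\bx \in \Gamma$ by isolating the leading-order singular behavior in $\delta$.

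\textbf{Step 1: Localize and split the ball.} I would first change variables $\bz = \by - \bx$ in \eqref{Ldel_s_2} to obtain
\[
(\Ls\bv)(\bx) = \frac{15}{|B_\delta|}\int_{B_\delta(0)}\big(\mu(\bx) + \mu(\bx+\bz)\big)\frac{\bz\otimes\bz}{|\bz|^4}\big(\bv(\bx+\bz) - \bv(\bx)\big)\,d\bz,
\]
and split $B_\delta(0) = B_\delta^+ \cup B_\delta^-$ according to the phases $\Omega_\pm - \bx$. Using the $C^1$ regularity of $\Gamma$, each $B_\delta^\pm$ differs from the exact half-ball $\{\bz \in B_\delta(0) : \pm\, \bz \cdot \bn(\bx) > 0\}$ by a geometric error that is subleading after division by $|B_\delta|$, so in the leading-order analysis $B_\delta^\pm$ may be replaced by these half-balls. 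By the convention in \eqref{lambda}, $\mu(\bx) = \mu_+$ for $\bx \in \Gamma$, while $\mu(\bx+\bz) = \mu_\pm$ on $B_\delta^\pm$.

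\textbf{Step 2: Taylor expand on each side and extract the singular order.} Continuity of $\bv$ across $\Gamma$ gives $\bv(\bx^+) = \bv(\bx^-) = \bv(\bx)$, and the smoothness of $\bv$ on each open phase yields
\[
\bv(\bx+\bz) - \bv(\bx) = \big(\nabla\bv(\bx^\pm)\big)\bz + \BigO{|\bz|^2}
\]
uniformly for $\bz \in B_\delta^\pm$. The quadratic remainder contributes only $\BigO{1}$ after division by $|B_\delta|$ and is hence subleading. For the linear piece, introduce the angular tensors
\[
\mathbf{T}^\pm[A] := \int_{S^2_\pm}(\hat\omega^{T} A\hat\omega)\,\hat\omega\, d\hat\omega, \qquad S^2_\pm := \{\hat\omega \in S^2 : \pm\,\hat\omega\cdot\bn(\bx) > 0\};
\]
oddness of the integrand on $S^2$ gives $\mathbf{T}^-[A] = -\mathbf{T}^+[A]$. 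In spherical coordinates $\bz = r\hat\omega$, the linear-in-$\bz$ contribution over $B_\delta^\pm$ evaluates to $(\delta^2/2)\,\mathbf{T}^\pm[\nabla\bv(\bx^\pm)]$, and combining with $|B_\delta| = \tfrac{4}{3}\pi\delta^3$ yields
\[
(\Ls\bv)(\bx) = \frac{45}{8\pi\,\delta}\,\mathbf{T}^+\!\Big[2\mu_+\,\nabla\bv(\bx^+) - (\mu_+ + \mu_-)\,\nabla\bv(\bx^-)\Big] + \BigO{1}.
\]

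\textbf{Step 3: Divergence and $L^p$ unboundedness.} A direct computation of $\mathbf{T}^+$ in coordinates adapted to $\bn(\bx)$ shows it is a nonzero linear map of its matrix argument, so for $\bv$ satisfying a mild nondegeneracy (the bracketed combination not lying in $\ker\mathbf{T}^+$) the coefficient of $1/\delta$ is nonzero and $(\Ls\bv)(\bx) \to \infty$, proving the first claim. For the $L^p$ statement I would repeat the same expansion at points $\bx$ at signed distance $d$ from $\Gamma$ with $|d| < \delta$; the split now uses the plane at height $d$, producing a leading coefficient depending continuously on $d/\delta$ and bounded away from zero in absolute value on a set of positive $d/\delta$-measure. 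This gives $|(\Ls\bv)(\bx)| \gtrsim 1/\delta$ on a tubular $\delta$-band around $\Gamma$, whence
\[
\|\Ls\bv\|_{L^p(\Omega)}^p \gtrsim \delta^{-p}\cdot \delta\,|\Gamma| \sim \delta^{1-p},
\]
unbounded as $\delta \to 0$ for $p > 1$; the $p=1$ endpoint requires a slightly more refined argument (e.g., exhibiting definite-sign contributions on subregions whose cumulative $L^1$-mass does not cancel).

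\textbf{Main obstacle.} The principal difficulty is the nondegeneracy of $\mathbf{T}^+$ on the specific combination $2\mu_+\nabla\bv(\bx^+) - (\mu_+ + \mu_-)\nabla\bv(\bx^-)$: one must compute $\mathbf{T}^+$ explicitly (or identify its kernel) and simultaneously verify that the geometric error from replacing $\Gamma$ by its tangent plane is truly subleading after multiplication by $|\bz|^{-2}$ and division by $|B_\delta|$. A secondary subtlety is the $p=1$ endpoint of the $L^p$ claim, where the naive tubular estimate only yields $\BigO{1}$.
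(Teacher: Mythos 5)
Your argument follows the paper's proof essentially step for step: change variables, split $B_\delta(0)$ into the two half-regions cut out by $\Gamma$, Taylor-expand $\bv$ separately on each side using its continuity across the interface, absorb the quadratic remainders into $\BigO{1}$, and use the odd symmetry of $\bz\otimes\bz\otimes\bz/|\bz|^4$ to reduce the leading term to the half-ball third-moment tensor applied to $2\mu_+\nabla\bv(\bx^+)-(\mu_++\mu_-)\nabla\bv(\bx^-)$, scaling like $1/\delta$. The one step you defer---the explicit evaluation of your $\mathbf{T}^+$ and of its kernel---is precisely what the paper supplies in Lemma~\ref{lem_K_delta}: $\limd \delta\,\mathbb{K}_\delta=\mathbb{K}$ with $\mathbb{K}A=\frac{3}{32}\big((A+A^T)\bn+(\tr(A)-A\bn\cdot\bn)\bn\big)$, so the kernel is a proper subspace and the $1/\delta$ coefficient is generically nonzero; as you correctly anticipate, the lemma is therefore implicitly restricted to fields $\bv$ for which this particular combination is not annihilated (a constant $\bv$, for instance, produces no divergence), a caveat the paper itself glosses over. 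On the $L^p$ claim your tubular-band estimate is more explicit than the paper's one-line ``consequently,'' and your exponent count $\delta^{1-p}$ exposes a real issue: for $p=1$ the band of width $\sim\delta$ contributes only $\BigO{1}$ mass while off the band $\Ls\bv$ tends to the bounded field $\Ns\bv$, so the $p=1$ endpoint of the unboundedness claim does not follow from this argument (and does not appear to be salvageable by sign considerations either). Apart from carrying out the moment computation, your proposal is sound and, on the $L^p$ point, more careful than the original.
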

\begin{remark}
This result holds in the more general case when $\mu$ is differentiable on $\Omega\setminus\Gamma$ and has a jump discontinuity
across the interface $\Gamma$ rather than just piecewise constant.
\end{remark}
\begin{proof}
 Let $\bx$ be a point on the interface $\Gamma$ away from $\partial\Omega$ by a distance of at least $\delta$.
%Let $\bx$ be a point on the interface $\Gamma$ away from $\partial\Omega$ by a distance of at least $2\delta$.
Then $(\Ls\bv)(\bx)$ in \eqref{Ldel_s_2}, after a change of variables and using the fact that $B_\delta(\bx)=\Bp{\bx}\cup\Bm{\bx}$, 
can be written as
\begin{eqnarray}
\label{Ldel_s_3}
\nonumber
(\Ldel_s \bv)(\bx)&=& \frac{15}{|B_\delta|}\int_{\Bp{\bzz}}
  \big(\mu_+ +\mu_+\big) \frac{\bz
\otimes\bz}{|\bz|^4}\big( \bv(\bx+\bz)-\bv(\bx)\big)
  \,d\bz\\
&& + \frac{15}{|B_\delta|}\int_{\Bm{\bzz}}
  \big(\mu_+ +\mu_-\big) \frac{\bz
\otimes\bz}{|\bz|^4}\big( \bv(\bx+\bz)-\bv(\bx)\big)
  \,d\bz,
\end{eqnarray}
where $\Bp{\bzz}=\Bp{\bx}-\bx$ and $\Bm{\bzz}=\Bm{\bx}-\bx$. Note that in \eqref{Ldel_s_3} we have used the facts that, for $\bx\in\Gamma$,
 $\mu(\bx)=\mu_+$, $\mu(\bx+\bz)=\mu_+$ for $\bz\in\Bp{\bzz}$, and $\mu(\bx+\bz)=\mu_-$ for $\bz\in\Bm{\bzz}$.
Since $\bv$ is smooth on each side of $\Gamma$, then for $\bz$ in the $+$side of $\Gamma$ (i.e., $\bz\in\Bp{\bzz}$), $\bv$ can be expanded as
\begin{eqnarray}
\label{Ls_taylor4_1}
\bv(\bx+\bz)-\bv(\bx)=\nabla\bv(\bx^+)\bz+\br_+(\bv;\bx,\bz),
\end{eqnarray}
where
\begin{eqnarray}
%\nonumber
%\nabla\bv(\bx^+)&:=&\lim_{\by\rightarrow\bx, \by\in\Bp{\bzz}}\nabla\bv(\by),\\
\label{Ls_remainder4_1}
\br_+(\bv;\bx,\bz)&=&\frac{1}{2} \nabla\nabla\bv(\bxi_+)\;\bz\otimes\bz
\end{eqnarray}
for some $\bxi_+$ on the line segment joining $\bx$ and  $\bx+\bz$. Similarly, $\bv$ can be expanded in a Taylor series in the 
$-$side of $\Gamma$. For $\bz\in\Bm{\bzz}$,
\begin{eqnarray}
\label{Ls_taylor4_2}
\bv(\bx+\bz)-\bv(\bx)=\nabla\bv(\bx^-)\bz+\br_-(\bv;\bx,\bz),
\end{eqnarray}
where
\begin{eqnarray}
%\nonumber
%\nabla\bv(\bx^-)&:=&\lim_{\by\rightarrow\bx, \by\in\Bm{\bzz}}\nabla\bv(\by),\\
\label{Ls_remainder4_2}
\br_-(\bv;\bx,\bz)&=&\frac{1}{2} \nabla\nabla\bv(\bxi_-)\;\bz\otimes\bz
\end{eqnarray}
for some $\bxi_-$ on the line segment joining $\bx$ and  $\bx+\bz$.
Substituting \eqref{Ls_taylor4_1} and \eqref{Ls_taylor4_2} in \eqref{Ldel_s_3}, expanding the integrals, 
and rearranging the tensor products, we find
\begin{eqnarray}
\label{Ldel_s_4}
\nonumber
(\Ldel_s \bv)(\bx)&=& \frac{15}{|B_\delta|}(2\mu_+)\int_{\Bp{\bzz}}
   \frac{\bz\otimes\bz\otimes\bz}{|\bz|^4}  \,d\bz\;\nabla\bv(\bx^+)
\nonumber
+ \frac{15}{|B_\delta|}(2\mu_+)\int_{\Bp{\bzz}}
   \frac{\bz\otimes\bz}{|\bz|^4}\;\br_+(\bv;\bx,\bz)\,d\bz\\
\nonumber
&+& \frac{15}{|B_\delta|}(\mu_++\mu_-)\int_{\Bm{\bzz}}
   \frac{\bz\otimes\bz\otimes\bz}{|\bz|^4}  \,d\bz\;\nabla\bv(\bx^-)
 + \frac{15}{|B_\delta|}(\mu_++\mu_-)\int_{\Bm{\bzz}}
   \frac{\bz\otimes\bz}{|\bz|^4}\;\br_-(\bv;\bx,\bz)\,d\bz.\\
\end{eqnarray}
Using \eqref{Ls_remainder4_1}, we obtain the following bound 
\begin{eqnarray}
\label{Ls_estimate3_1}
\nonumber
\left| 
\frac{15}{|B_\delta|}(2\mu_+)\int_{\Bp{\bzz}}
   \frac{\bz\otimes\bz}{|\bz|^4}\;\br_+(\bv;\bx,\bz)\,d\bz\right| 
   &\leq& \frac{15}{|B_\delta|}(2\mu_+)\int_{\Bp{\bzz}}
   \frac{\left|\bz\otimes\bz\otimes\bz\otimes\bz\right|}{|\bz|^4}\,d\bz \left|\frac{1}{2}\nabla\nabla\bv(\bxi_+)\right|\\
&=& \BigO{1}.
\end{eqnarray}
Similarly, one finds 
\begin{eqnarray}
\label{Ls_estimate3_2}
\frac{15}{|B_\delta|}(\mu_++\mu_-)\int_{\Bm{\bzz}}\frac{\bz\otimes\bz}{|\bz|^4}\;\br_-(\bv;\bx,\bz)\,d\bz=\BigO{1},
\end{eqnarray}
and hence the second and fourth terms in \eqref{Ldel_s_4} are finite in the limit as $\delta\rightarrow 0$. 
Using \eqref{Ls_estimate3_1}, \eqref{Ls_estimate3_2}, and using the fact that
\[
0=\int_{B_\delta(\bzz)}
   \frac{\bz\otimes\bz\otimes\bz}{|\bz|^4}  \,d\bz= \int_{\Bp{\bzz}}
   \frac{\bz\otimes\bz\otimes\bz}{|\bz|^4}  \,d\bz+\int_{\Bm{\bzz}}
   \frac{\bz\otimes\bz\otimes\bz}{|\bz|^4}  \,d\bz,
\]
equation \eqref{Ldel_s_4} becomes
\begin{eqnarray}
\label{Ldel_s_5}
\nonumber
(\Ldel_s \bv)(\bx)&=& \frac{15}{|B_\delta|}\int_{\Bp{\bzz}}
   \frac{\bz\otimes\bz\otimes\bz}{|\bz|^4}  \,d\bz\;\left(2\mu_+\nabla\bv(\bx^+)-(\mu_++\mu_-)\nabla\bv(\bx^-)\right)+\BigO{1}.\\
\end{eqnarray}
From Lemma \ref{lem_K_delta} (see Section \ref{interf_model_sec}), the third-order tensor
\begin{equation}
\label{K_delta}
 \mathbb{K}_\delta := \frac{1}{|B_\delta|}\int_{\Bp{\bzz}}\frac{\bz\otimes\bz\otimes\bz}{|\bz|^4}\,d\bz
\end{equation}
behaves, in the limit as $\delta\rightarrow 0$, as
%is of the form
\begin{equation}
\label{K_delta_2}
  \mathbb{K}_\delta \approx \frac{1}{\delta}\;  \mathbb{K}
\end{equation}
for a constant third-order tensor $ \mathbb{K}$.
Thus, equations \eqref{Ldel_s_5}, \eqref{K_delta}, and \eqref{K_delta_2} imply that
\begin{equation}
\label{lim_Ls_interface}
 \limd \left(\Ls\bv\right)(\bx)=\infty\;\;\;\; \mbox{ for } \bx\in\Gamma,
\end{equation}
and, consequently, that the sequence $\left(\Ls\bv\right)_\delta$ is unbounded in $L^p(\Omega)$.
%\qed 
\end{proof}
\begin{remark}
If $\bv$ is smooth at  the interface then \eqref{Ldel_s_5}, in the proof above, becomes
\begin{equation}
 \label{Ls_5}
\left(\Ls\bv\right)(\bx)=
\frac{15}{|B_\delta|}\int_{\Bp{\bzz}}\frac{\bz\otimes\bz\otimes\bz}{|\bz|^4}\,d\bz\;\left(\mu_+-\mu_-\right)\nabla\bv(\bx)
+\BigO{1},
\end{equation}
and hence \eqref{lim_Ls_interface} still hold in this case.
\end{remark}

The behavior of the operator $\Ld$ in \eqref{Ldel_d_3} at material interfaces, in the local limit, is given by the following result.
\begin{lem}
\label{nonconvg_2}
 Assume that $\mu$ and $\lambda$ are given by \eqref{lambda} and that the vector field $\bv$ 
is continuous on $\Omega$ and smooth on $\Omega\setminus\Gamma$.
%is sufficiently differentiable except possibly at the interface $\Gamma$. 
Then for $\bx\in\Gamma$,
\[
 \limd \left(\Ld\bv\right)(\bx) \mbox{ does not exist}.
\]
Moreover, the sequence $\left(\Ld\bv\right)_\delta$ is unbounded in $L^p(\Omega)$, with $1\leq p<\infty$.
\end{lem}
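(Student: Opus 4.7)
My plan is to mirror Lemma~\ref{nonconvg_1}, adapted to the double-integral structure of \eqref{Ldel_d_3}. Fix $\bx\in\Gamma$ at distance at least $2\delta$ from $\partial\Omega$, set $c_\pm:=\lambda_\pm-\mu_\pm$, and decompose $B_\delta(\bx)=\Bp{\bx}\cup\Bm{\bx}$. Since $c:=\lambda-\mu$ is piecewise constant, splitting the outer integral of \eqref{Ldel_d_3} gives
\[
(\Ld\bv)(\bx) = \frac{9c_+}{|B_\delta|^2}\int_{\Bp{\bx}} \frac{\by-\bx}{|\by-\bx|^2}\,I(\by)\,d\by + \frac{9c_-}{|B_\delta|^2}\int_{\Bm{\bx}} \frac{\by-\bx}{|\by-\bx|^2}\,I(\by)\,d\by,
\]
where $I(\by):=\int_{B_\delta(\by)} \frac{\bz-\by}{|\bz-\by|^2}\cdot\bv(\bz)\,d\bz$.

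I would then analyze $I(\by)$ following the method of Proposition~\ref{prop2}. For $\by$ farther than $\delta$ from $\Gamma$, a Taylor expansion of $\bv$ on $B_\delta(\by)$ yields $I(\by)=\tfrac{|B_\delta|}{3}\Div\bv(\by)+\BigO{\delta^4}$. For $\by$ in the $\delta$-tubular neighborhood of $\Gamma$, I would split $B_\delta(\by)$ along $\Gamma$ and apply one-sided Taylor expansions of $\bv$ on each part, using continuity of $\bv$ across $\Gamma$ (so that any smooth-extension jump $\bv_+-\bv_-$ vanishes on $\Gamma$ and is Lipschitz-bounded by the normal distance to $\Gamma$) to control the interface mismatch. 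This produces $I(\by)=\tfrac{|B_\delta|}{3}\widetilde{D}(\by)+\BigO{\delta^4}$ for a bounded function $\widetilde{D}$ that reduces to $\Div\bv(\by)$ away from $\Gamma$ and to a cap-volume-weighted combination of $\Div\bv_+$ and $\Div\bv_-$ near $\Gamma$. Inserting this back, changing variables $\bh=\by-\bx$, and Taylor-expanding $\widetilde{D}$ around $\bx$, the zeroth-order terms give
\[
(\Ld\bv)(\bx) = \frac{3c_+\Div\bv(\bx^+)}{|B_\delta|}\int_{\Bp{\bzz}}\frac{\bh}{|\bh|^2}\,d\bh + \frac{3c_-\Div\bv(\bx^-)}{|B_\delta|}\int_{\Bm{\bzz}}\frac{\bh}{|\bh|^2}\,d\bh + \BigO{1}.
\]
A direct spherical-coordinate computation, parallel to Lemma~\ref{lem_K_delta}, gives $\int_{\Bp{\bzz}}\bh/|\bh|^2\,d\bh = \tfrac{\pi\delta^2}{2}\bn$ and the opposite sign for $\Bm{\bzz}$, so each prefactor is of order $1/\delta$ and
\[
(\Ld\bv)(\bx) = \frac{9}{8\delta}\bigl(c_+\Div\bv(\bx^+) - c_-\Div\bv(\bx^-)\bigr)\bn + \BigO{1}.
\]
This diverges whenever the bracketed quantity is nonzero, which is the generic situation: either $c_+\neq c_-$ or $\Div\bv(\bx^+)\neq\Div\bv(\bx^-)$ is enough. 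Hence $\limd(\Ld\bv)(\bx)$ fails to exist, and the $L^p$ unboundedness of $(\Ld\bv)_\delta$ follows from the pointwise $\BigO{1/\delta}$ blow-up along $\Gamma$.

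The main obstacle is the analysis of $I(\by)$ for $\by$ inside the $\delta$-tubular neighborhood of $\Gamma$, where the inner ball straddles the interface and $\bv$ cannot be expanded as a single smooth function. Verifying that the Taylor remainder, once the outer integration is carried out, stays strictly below the $1/\delta$ singular term demands careful bookkeeping of the $s$-dependent cap volumes $|\Bp{\by}|,\,|\Bm{\by}|$ together with the normal-distance Lipschitz bound on the smooth-extension jump $\bv_+-\bv_-$. Once this estimate is secured, the remaining outer-integral analysis is essentially the half-ball asymmetry computation already executed for $\Ls$ in the proof of Lemma~\ref{nonconvg_1}.
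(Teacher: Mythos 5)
Your argument is essentially the paper's: the proof of this lemma is omitted in the paper, which states only that it parallels Lemma~\ref{nonconvg_1} and records the two key facts \eqref{Ld_5} and \eqref{Ld_6}, and your derivation --- splitting the outer integral of \eqref{Ldel_d_3} along $\Gamma$, reducing the inner integral to $\tfrac{|B_\delta|}{3}\Div\bv$ plus controlled errors, and computing $\int_{\Bp{\bzz}}\bh/|\bh|^2\,d\bh=\tfrac{\pi\delta^2}{2}\bn$ --- reproduces exactly those two identities, including the leading term $\tfrac{9}{8\delta}\bigl(c_+\Div\bv(\bx^+)-c_-\Div\bv(\bx^-)\bigr)\bn$. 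The step you flag as delicate (the inner ball straddling $\Gamma$, where one must check that the cap-weighted and jump-mismatch corrections to $\widetilde{D}$ do not pollute the $1/\delta$ coefficient) is precisely the detail the paper suppresses, so your treatment is, if anything, more explicit than the source.
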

The proof of this lemma is similar to that of Lemma \ref{nonconvg_1} and thus will not be presented here.
However, we note that for $\bx\in\Gamma$, it can be shown that
\begin{equation}
 \label{Ld_5}
\left(\Ld\bv\right)(\bx)=
\left((\lambda_+ -\mu_+)(\Div\bv)(\bx^+)-(\lambda_- -\mu_-)(\Div\bv)(\bx^-)\right)\;
\frac{3}{|B_\delta|}\int_{\Bp{\bx}}\frac{\by-\bx}{|\by-\bx|^2}\,d\by\;
+\BigO{1},
\end{equation}
and  
\begin{equation}
 \label{Ld_6}
\limd\frac{3\delta}{|B_\delta|}\int_{\Bp{\bx}}\frac{\by-\bx}{|\by-\bx|^2}\,d\by\;=\frac{9}{8}\bn.
\end{equation}

The following result provides a summary  to the behavior of the linear peridynamics operator $\Ldel$, in the limit
as $\delta\rightarrow 0$, in the presence of material interfaces.
\begin{thm}
\label{nonconvg_thm}
Assume that the material properties $\mu$ and $\lambda$ are  smooth on $\Omega\setminus\Gamma$ and have 
jump discontinuities across the interface $\Gamma$. Assume further that the vector field $\bv$ 
is continuous on $\Omega$ and smooth on $\Omega\setminus\Gamma$.
%is sufficiently differentiable except possibly at the interface $\Gamma$. 
Then 
\begin{enumerate}
\item[(i)] for $\bx\in\Omega\setminus\Gamma$,
\[
 \limd \left(\Ldel\bv\right)(\bx) = \N\bv(\bx),
\]
where $\N$ is the operator of linear elasticity given by \eqref{N}, and
 \item[(ii)] for $\bx\in\Gamma$,
\[
 \limd \left(\Ldel\bv\right)(\bx) \mbox{ does not exist}.
\]
Moreover, the sequence $\left(\Ld\bv\right)_\delta$ is unbounded in $L^p(\Omega)$, with $1\leq p<\infty$.
\end{enumerate}
\end{thm}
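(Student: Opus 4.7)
The theorem is essentially a summary/corollary of results already established, so my plan is to decompose $\Ldel = \Ls + \Ld$ via \eqref{Ldel0} and handle each part by appealing to Theorem~\ref{thm_convg}, Lemma~\ref{nonconvg_1}, and Lemma~\ref{nonconvg_2}. The main work is organizational and checking that off-interface the piecewise-constant material properties are locally smooth in the relevant region, and that on-interface the divergent contributions of $\Ls\bv$ and $\Ld\bv$ do not cancel.

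For part (i), fix $\bx \in \Omega\setminus\Gamma$. Since $\Gamma$ is a closed $C^1$ surface and $\bx$ lies in the open set $\Omega_+\cup\Omega_-$, there exists $r>0$ with $B_r(\bx)\subset\Omega_+$ (or $\Omega_-$). For all $\delta$ small enough (small enough that the outer ball of radius $\delta$ about $\bx$ and all inner balls of radius $\delta$ about points in $B_\delta(\bx)$ remain in $B_r(\bx)$), the integrals defining $(\Ldel\bv)(\bx)$ only see material parameters from one phase. By \eqref{lambda}, these parameters are constant on that phase and in particular are $C^3$ on $B_r(\bx)$; extending them smoothly to all of $\Omega$ leaves $(\Ldel\bv)(\bx)$ unchanged for these small $\delta$. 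Theorem~\ref{thm_convg} then applies on $B_r(\bx)$ to give $\limd (\Ldel\bv)(\bx) = (\N\bv)(\bx)$.

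For part (ii), fix $\bx\in\Gamma$. Lemma~\ref{nonconvg_1} shows that the $L^p$-unboundedness of $(\Ldel\bv)_\delta$ is already forced by the $\Ls$ piece, provided the $\Ld$ piece does not cancel the divergent part of $\Ls$ in the sum $\Ls+\Ld$. The leading $\delta^{-1}$ term of $(\Ls\bv)(\bx)$, read off from \eqref{Ldel_s_5} and \eqref{K_delta_2}, is $\mathbb{K}\bigl(2\mu_+\nabla\bv(\bx^+) - (\mu_++\mu_-)\nabla\bv(\bx^-)\bigr)/\delta$, which depends on the full gradient $\nabla\bv(\bx^\pm)$ and generally produces components tangential to the interface. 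The leading $\delta^{-1}$ term of $(\Ld\bv)(\bx)$, read off from \eqref{Ld_5}--\eqref{Ld_6}, is $\tfrac{9}{8\delta}\bigl((\lambda_+-\mu_+)\Div\bv(\bx^+) - (\lambda_--\mu_-)\Div\bv(\bx^-)\bigr)\bn$, which points along $\bn$ and depends only on the traces $\Div\bv(\bx^\pm)$. Since the purely normal $\Ld$-contribution cannot cancel the tangential part of the $\Ls$-contribution, and the two depend on algebraically independent functionals of $(\bv,\lambda,\mu)$, the sum still carries a nonzero $\delta^{-1}$ divergent part for generic data, so $\limd(\Ldel\bv)(\bx)$ does not exist. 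The $L^p$-unboundedness then follows immediately from the $L^p$-unboundedness asserted in Lemma~\ref{nonconvg_1} (and independently in Lemma~\ref{nonconvg_2}) together with this non-cancellation.

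The main obstacle is to make the non-cancellation claim in (ii) precise. In a careful write-up I would either (a) project the combined $\delta^{-1}$ coefficient onto the tangent plane of $\Gamma$ at $\bx$ and observe that only the $\Ls$ contribution survives the projection, so that nonvanishing of the tangential part of $\mathbb{K}\bigl(2\mu_+\nabla\bv(\bx^+) - (\mu_++\mu_-)\nabla\bv(\bx^-)\bigr)$ already implies divergence, or (b) exhibit an explicit admissible $\bv$ for which the two divergent contributions are non-parallel, ensuring the sum still blows up like $\delta^{-1}$. Either path reduces the whole theorem to a direct bookkeeping exercise on top of the three previously proven results.
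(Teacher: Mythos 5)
Your proof follows the same route as the paper's: part (i) is obtained by localizing away from $\Gamma$ (noting that for small $\delta$ the relevant $2\delta$-neighborhood lies in a single phase) and invoking Theorem~\ref{thm_convg}, and part (ii) is reduced to Lemmas~\ref{nonconvg_1} and~\ref{nonconvg_2}. The one place you go beyond the paper is the non-cancellation argument: the paper's proof simply asserts that part (ii) ``follows from'' the two lemmas, which strictly leaves open the possibility that the $\delta^{-1}$ terms of $\Ls\bv$ and $\Ld\bv$ cancel in the sum $\Ldel\bv=\Ls\bv+\Ld\bv$; your observation that the divergent part of $\Ld\bv$ is purely normal by \eqref{Ld_5}--\eqref{Ld_6} while that of $\Ls\bv$ generically carries a nonzero tangential component (read off from \eqref{Ldel_s_5} and \eqref{K_A}) closes that gap and is the right way to do it. Note only that both your argument and the paper's lemmas hold ``for generic data'' -- for special $\bv$ (e.g.\ $\nabla\bv(\bx^+)=\nabla\bv(\bx^-)=0$) the divergent coefficients vanish and the limit exists -- so the non-existence claim in (ii), in your write-up as in the paper's, should be understood in that generic sense.
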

\begin{proof}
Part (ii) follows from Lemma \ref{nonconvg_1} and Lemma \ref{nonconvg_2}. For part (i), let 
$\bx\in\Omega\setminus\Gamma$. Then for sufficiently small $\delta$, the ball $B_\delta(\bx)$ is away from the interface. 
Thus, Theorem \ref{thm_convg} applies and part (i) follows.
%\qed 
\end{proof}

%%%%%%%%%%%%%%%%%%%%%%%%%%%%%%%%%%%%%%%%%%%%%%%%%%%%%%%%%%%%%%%%%%%%%%%%%%%%%%%%%%%%%%%%%%%%%%%%%%%%%%%%%
%%%%%%%%%%%        Section 5      Peridynamics Interface Model    %%%%%%%%%%%%%%%%%%%%%%%%%%%%%%%%%%%%%%%
%%%%%%%%%%%%%%%%%%%%%%%%%%%%%%%%%%%%%%%%%%%%%%%%%%%%%%%%%%%%%%%%%%%%%%%%%%%%%%%%%%%%%%%%%%%%%%%%%%%%%%%%%

\section{A New Peridynamics Model for Material Interfaces}
\label{interf_model_sec}
In this section we introduce a peridynamics model for heterogeneous media  in the presence of material interfaces. Our model
consists of a modified version of the linear peridynamics operator $\Ldel$, given by \eqref{Ldel0}, \eqref{Ldel_s_2}, and \eqref{Ldel_d_3}, 
together with a {\it nonlocal interface condition}. 
This new model is shown, in Theorem \ref{thm_interface_model}, to converge to the classical interface model of linear elasticity.
%We also define natural peridynamics nonlocal interface conditions and discuss their local limit.

%%%%%%%%%%%%%%%%%%%%%%%%%%%%%%%%%%%%%%
%% Subsection 5.1
%%%%%%%%%%%%%%%%%%%%%%%%%%%%%%%%%%%%%%

%\subsection{Peridynamics natural interface conditions}
\subsection{Peridynamics  interface conditions}
\label{interf_model_sec_1}
The divergence of peridynamics at interfaces in the limit of vanishing nonlocality (see Theorem \ref{nonconvg_thm}) is, in fact, not 
surprising.
Indeed, let us consider the corresponding interface problem in linear elasticity inside a two-phase composite, with
$\Omega=\Omega_+\cup\Omega_-\cup\Gamma$ as described in Section \ref{nonconvg_sec}.
The strong form of the elastic equilibrium interface problem is given by the following system of partial differential equations 
\begin{equation}
\label{interface_pde}
 \left\{
\begin{array}{rll}
 \displaystyle \Div\sigma(\bx) &=\bb(\bx),\;\;\; &\bx\in\Omega_+ \\
 \displaystyle \Div\sigma(\bx) &=\bb(\bx),\;\;\; &\bx\in\Omega_- \\
\\
\displaystyle\jump{\sigma\bn}&=0,\;\;\; &\bx\in \Gamma\\
\displaystyle\jump{\bu}&=0,\;\;\; &\bx\in \Gamma
%\displaystyle \sigma\bn(\bx^+) &=\sigma\bn(\bx^-),\;\;\; &\bx\in \Gamma\\
%\displaystyle \bu(\bx^+) &=\bu(\bx^-),\;\;\; &\bx\in \Gamma
\end{array}
\right.
\end{equation}
where $\sigma$ is the stress tensor given by \eqref{sigma}.
We emphasize that imposing interface jump conditions (the last two equations of \eqref{interface_pde}) is necessary for 
a classical solution $\bu$ defined on $\Omega$ to exist.
%Therefore, in order to correctly take the limit as $\delta\rightarrow 0$ of the peridynamics operator in the presence
Therefore, in order to recover the interface problem in elasticity, given by \eqref{interface_pde} as the local limit of 
peridynamics inside heterogeneous media in the  presence of material interfaces,
we need to introduce a peridynamics interface model and  impose  nonlocal interface conditions 
such that 
the model satisfies  the conditions C(i)-C(iii), introduced in  Section \ref{sec_intro} (Introduction).
% %we must impose {\it nonlocal interface conditions} which are required to satisfy the following:
% \begin{enumerate}
%  \item[(i)] The linear elasticity operator is the local limit of a corresponding peridynamics operator.
% %\item[(i)] If nonlocal interface conditions are imposed, then the linear elasticity operator is the local limit of a corresponding peridynamic operator.
% \item[(ii)] The interface conditions in elasticity are recovered from the nonlocal interface conditions in peridynamics.
% \item[(iii)] The nonlocal interface conditions are integral equations that do not include spatial derivatives of the displacement field.
% \end{enumerate}

We note that Theorem \ref{thm_convg} and Theorem \ref{nonconvg_thm}
%Lemma \ref{nonconvg_1}, and Lemma \ref{nonconvg_2} 
imply that if we assume
\begin{equation}
 \label{natural}
(\Ldel\bu)(\bx)=0,\;\;\; \bx\in\Gamma,
\end{equation}
then  
as $\delta\rightarrow 0$,
\begin{equation}
\label{natural_convg}
\Ldel \bu\longrightarrow \N\bu\;\;\;\; \mbox{ in } L^p(\Omega)^3
\end{equation}
%for all $p\geq 1$.
for $1\leq p<\infty$. This means that the following system 
\begin{equation*}
\label{natural_interface}
 \left\{
\begin{array}{rll}
 \displaystyle \Ldel\bu(\bx) &=\bb(\bx),\;\;\; &\bx\in\Omega\\
 \\
 \displaystyle \Ldel\bu(\bx) &=0,\;\;\; &\bx\in \Gamma
%\displaystyle\jump{\bu}&=0,\;\;\; &\bx\in \Gamma
%\displaystyle \bu(\bx^+) &=\bu(\bx^-),\;\;\; &\bx\in \Gamma
\end{array}
\right.\tag{\ref{natural_interface_sys}}
\end{equation*}
%equation \eqref{natural} 
satisfies conditions C(i) and C(iii), and, 
since the peridynamics operator $\Ldel$ has been kept without modifications 
we call \eqref{natural} {\it peridynamics natural interface condition}.
%$\Ldel\bu$ becomes no longer divergent at the interface and
%However, the next result shows that 
However, we show in Proposition \ref{prop_wrong_interface_conditions} that 
\eqref{natural} does not satisfy requirement (ii)
since the local limit of 
\eqref{natural} is different from the local interface condition 
\begin{equation}
\label{traction_cond}
 \sigma\bn(\bx^+) =\sigma\bn(\bx^-),\;\;\; \bx\in \Gamma.
\end{equation}
%This follows from the next result.
%$\delta(\Ldel\bu)(\bx)$, for $\bx\in\Gamma$, is 

By applying a coordinate translation, we may assume that the unit vector $\bn$ is the normal to the interface  at the origin
(i.e., $\bn=\bn({\bf 0})$). 
\begin{lem}
\label{lem_K_delta}
Let $\mathbb{K}_\delta$ be given by \eqref{K_delta}. Then
\begin{equation}
 \label{K_delta_limit}
\limd \delta\;\mathbb{K}_\delta=\mathbb{K},
\end{equation}
where the third-order tensor $\mathbb{K}$ satisfies
\begin{equation}
 \label{K_A}
\mathbb{K} A = \frac{3}{32} \left(\left(A+A^T\right)\bn + \left(\tr(A)-A\bn\cdot\bn\right)\bn\right)
\end{equation}
for any second-order tensor $A$.
\end{lem}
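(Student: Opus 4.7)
\textbf{Proof plan for Lemma~\ref{lem_K_delta}.} The plan is to reduce the statement to a fixed integral over the unit upper half-ball by rescaling, and then to extract the third-order tensor $\mathbb{K}$ by an explicit computation in spherical coordinates. First I would apply the translation already noted so that $\bzz\in\Gamma$ and choose Cartesian axes $(e_1,e_2,e_3)$ with $\bn = e_3$. The substitution $\bz=\delta\bw$ turns
\[
\mathbb{K}_\delta=\frac{1}{|B_\delta|}\int_{\Bp{\bzz}}\frac{\bz\otimes\bz\otimes\bz}{|\bz|^4}\,d\bz
\]
into
\[
\delta\,\mathbb{K}_\delta=\frac{3}{4\pi}\int_{D_\delta}\frac{\bw\otimes\bw\otimes\bw}{|\bw|^4}\,d\bw,\qquad D_\delta:=\tfrac{1}{\delta}\bigl(B_\delta(\bzz)\cap\Omega_+\bigr),
\]
using $|B_\delta|=\tfrac{4}{3}\pi\delta^3$ and cancelling the factor $\delta^3\cdot\delta^3/\delta^4=\delta^2$ produced by the change of variables with the $1/|B_\delta|$ in front.

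Next I would use the $C^1$ regularity of $\Gamma$ at $\bzz$. Since $\Gamma$ has tangent plane $\{w_3=0\}$ at the origin, the surface $\tfrac{1}{\delta}\Gamma\cap B_1(\bzz)$ is a graph $w_3=\varphi_\delta(w_1,w_2)$ with $\varphi_\delta\to 0$ uniformly on $B_1(\bzz)$ as $\delta\to 0$. Therefore the symmetric difference $D_\delta\,\triangle\,H$ with the flat upper half-ball $H:=\{\bw\in B_1(\bzz):\bw\cdot\bn>0\}$ has Lebesgue measure tending to zero. Because
\[
\left|\frac{\bw\otimes\bw\otimes\bw}{|\bw|^4}\right|\le \frac{1}{|\bw|},
\]
which is integrable on $B_1(\bzz)$ in $\mathbb{R}^3$, the dominated convergence theorem gives
\[
\limd \delta\,\mathbb{K}_\delta=\mathbb{K},\qquad \mathbb{K}:=\frac{3}{4\pi}\int_H\frac{\bw\otimes\bw\otimes\bw}{|\bw|^4}\,d\bw.
\]
This establishes \eqref{K_delta_limit}.

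It remains to verify \eqref{K_A}. I would pass to spherical coordinates $\bw=r\hat{\bw}$ with polar axis $\bn=e_3$, so $d\bw=r^2\,dr\,dS(\hat{\bw})$ and $\bw\otimes\bw\otimes\bw/|\bw|^4=r^{-1}\hat{\bw}\otimes\hat{\bw}\otimes\hat{\bw}$. The radial integral $\int_0^1 r\,dr=\tfrac{1}{2}$, so $\mathbb{K}=\tfrac{3}{8\pi}\int_{S^+}\hat{\bw}\otimes\hat{\bw}\otimes\hat{\bw}\,dS$ on the upper hemisphere $S^+$. Parity in the azimuthal variable eliminates every component except those indexed by $(1,1,3)$, $(2,2,3)$, $(3,3,3)$ and their permutations, which the standard substitutions $u=\sin\theta$ and $u=\cos\theta$ evaluate as $\pi/4$, $\pi/4$, and $\pi/2$ respectively. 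Multiplying by $3/(8\pi)$ yields the nonzero entries $\mathbb{K}_{113}=\mathbb{K}_{131}=\mathbb{K}_{311}=\mathbb{K}_{223}=\mathbb{K}_{232}=\mathbb{K}_{322}=3/32$ and $\mathbb{K}_{333}=3/16$. Applied to a second-order tensor $A$, this produces $(\mathbb{K}A)_1=\tfrac{3}{32}(A_{13}+A_{31})$, $(\mathbb{K}A)_2=\tfrac{3}{32}(A_{23}+A_{32})$, and $(\mathbb{K}A)_3=\tfrac{3}{32}(A_{11}+A_{22}+2A_{33})$. A direct comparison with the right-hand side of \eqref{K_A}, using $A\bn\cdot\bn=A_{33}$ and $\tr(A)-A\bn\cdot\bn=A_{11}+A_{22}$ in this frame, confirms the identity. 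The main obstacle is the bookkeeping in this last step; the geometric part of the argument is routine once the $C^1$ flattening is in hand.
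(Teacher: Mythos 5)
Your proof is correct and follows essentially the same route as the paper's: reduce $\delta\,\mathbb{K}_\delta$ to an integral over a flat half-ball and evaluate the third-order tensor explicitly in spherical coordinates. The only differences are presentational and in your favor — you work in a frame adapted to $\bn$ (so only the entries $\mathbb{K}_{113},\mathbb{K}_{223},\mathbb{K}_{333}$ and their permutations survive, rather than the paper's full list for a general normal obtained via a rotation), and you justify the half-ball limit more carefully via rescaling, the $C^1$ graph property, and absolute continuity of the integral, where the paper simply asserts it.
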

\begin{proof}
 To emphasize the dependence of the set $\Bp{\bzz}$ on the normal $\bn$, we denote this set by $B_\delta^{\bn+}(\bzz)$.
Using spherical coordinates the unit normal $\bn$ can be represented  by 
\[ 
\bn=\left( 
\begin{array}{c}
\cos{\phi}\sin{\theta}\\
\sin{\phi}\sin{\theta}\\
\cos{\theta}
\end{array} 
\right),
\] 
where $0\leq\phi\leq 2\pi$ and $0\leq\theta\leq \pi$. Define the rotation matrix
\begin{equation}
\label{R} 
R=\left( 
\begin{array}{ccc}
\cos{\phi} \cos{\theta}& \sin{\phi}\cos{\theta} & -\sin{\theta} \\
-\sin{\phi} & \cos{\phi} & 0 \\
\cos{\phi} \sin{\theta}& \sin{\phi}\sin{\theta} & \cos{\theta}  
\end{array} 
\right),
\end{equation}
and notice that 
\[
 R\;\bn=\hat{\bz}_3
=\left( 
\begin{array}{c}
0\\
0\\
1
\end{array} 
\right).
\]
Then, by applying the change of coordinates $\bz=R \bw$, we find
\begin{eqnarray}
 \label{K_delta_chng}
\nonumber
\delta\;\mathbb{K}_\delta &=& \frac{\delta}{|B_\delta|}\int_{B_\delta^{\bn+}(\bzz)}\frac{\bw\otimes\bw\otimes\bw}{|\bw|^4}\,d\bw\\
&=& \frac{\delta}{|B_\delta|}\int_{B_\delta^{\hat{\bz}_3 +}(\bzz)}\frac{R^{-1}\bz\otimes R^{-1}\bz\otimes R^{-1}\bz}{|R^{-1}\bz|^4}
\det(R^{-1})\,d\bz\\
\nonumber
&=& \frac{\delta}{|B_\delta|}\int_{B_\delta^{\hat{\bz}_3 +}(\bzz)}\frac{R^{T}\bz\otimes R^{T}\bz\otimes R^{T}\bz}{|\bz|^4}
\,d\bz,
\end{eqnarray}
where in the last step we have used the facts that $R^{-1}=R^T$, $|R^T \bz|=|\bz|$, and $\det(R^{-1})=1$.
Since the interface $\Gamma$ is smooth, we may assume that, in the limit as $\delta\rightarrow 0$, the set $B_\delta^{\hat{\bz}_3 +}$
is a half ball. Thus, a straightforward calculation using spherical coordinates shows that
\[
\limd \delta\;\mathbb{K}_\delta=\mathbb{K},
\]
where the entries $\mathbb{K}_{i j k}$ of the third-order tensor $\mathbb{K}$  are given by
\begin{equation}
\label{K_entries}
\left\{
\begin{array}{ccl}
 \mathbb{K}_{1 1 1} &=& \frac{3}{32} \cos{\phi} \;\sin{\theta}\left(3-\cos^2{\phi} \;\sin^2{\theta}\right),\\
 \mathbb{K}_{1 1 2} &=& \frac{3}{32} \sin{\phi} \;\sin{\theta}\left(1-\cos^2{\phi} \;\sin^2{\theta}\right)
=\mathbb{K}_{1 2 1}=\mathbb{K}_{2 1 1},\\
 \mathbb{K}_{1 1 3} &=& \frac{3}{32} \cos{\theta}\left(1-\cos^2{\phi} \;\sin^2{\theta}\right)
=\mathbb{K}_{1 3 1}=\mathbb{K}_{3 1 1},\\
%\mathbb{K}_{1 2 1} &=&\mathbb{K}_{1 1 2}\\
 \mathbb{K}_{1 2 2} &=& \frac{3}{32} \cos{\phi} \;\sin{\theta}\left(1-\sin^2{\phi} \;\sin^2{\theta}\right)
=\mathbb{K}_{2 1 2}=\mathbb{K}_{2 2 1},\\
 \mathbb{K}_{1 2 3} &=& -\frac{3}{32} \sin{\phi}\;\cos{\phi} \;\sin^2{\theta}\;\cos{\theta}
=\mathbb{K}_{1 3 2}=\mathbb{K}_{2 1 3}=\mathbb{K}_{2 3 1}=\mathbb{K}_{3 1 2}=\mathbb{K}_{3 2 1},\\
 \mathbb{K}_{1 3 3} &=& \frac{3}{32} \cos{\phi} \;\sin^3{\theta}
=\mathbb{K}_{3 1 3}=\mathbb{K}_{3 3 1},\\
 \mathbb{K}_{2 2 3} &=& \frac{3}{32} \cos{\theta} \left(1-\sin^2{\phi} \;\sin^2{\theta}\right)
=\mathbb{K}_{2 3 2}=\mathbb{K}_{3 2 2},\\
 \mathbb{K}_{2 3 3} &=& \frac{3}{32} \sin{\phi} \;\sin^3{\theta}
=\mathbb{K}_{3 2 3}=\mathbb{K}_{3 3 2},\\
 \mathbb{K}_{2 2 2} &=& \frac{3}{32} \sin{\phi}\;\sin{\theta} \left(3-\sin^2{\phi} \;\sin^2{\theta}\right),\\
 \mathbb{K}_{3 3 3} &=& \frac{3}{32} \cos{\theta} \left(3-\cos^2{\theta}\right).
\end{array}
\right.
\end{equation}
%Equation \eqref{K_A} follows from Using \eqref{K_entries} and 
%Straightforward algebraic calculations of $\mathbb{K}\;A$, using \eqref{K_entries}, and 
By calculating $\mathbb{K} A$, using \eqref{K_entries}, and comparing it with
$\frac{3}{32} \left(\left(A+A^T\right)\bn + \left(\tr(A)-A\bn\cdot\bn\right)\bn\right)$, one finds that \eqref{K_A} holds true.
%\qed 
\end{proof}
Applying Lemma \ref{lem_K_delta} with $A=\nabla\bv$ we obtain the following result.
\begin{cor}
\label{cor_K_delta}
Assume that $\bv$ is a differentiable vector-field. Then
\begin{equation}
 \label{K_delta_grad_v}
\limd \frac{\delta}{|B_\delta|}\int_{\Bp{\bzz}}\frac{\bz\otimes\bz\otimes\bz}{|\bz|^4}\,d\bz\;\nabla\bv= 
\frac{3}{32} \left( \left(\nabla\bv+\nabla\bv^T\right)\bn + (\Div\bv)\;\bn-\left(\nabla\bv\;\bn\cdot\bn\right)\bn\right).
\end{equation}
\end{cor}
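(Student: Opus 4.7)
The corollary is essentially a direct specialization of Lemma~\ref{lem_K_delta}, so the plan is to simply substitute $A = \nabla\bv$ and identify trace with divergence. The proof should be very short.

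First, I would observe that the quantity on the left-hand side is precisely $(\delta\,\mathbb{K}_\delta)\,\nabla\bv$ in the notation of Lemma~\ref{lem_K_delta}, since the contraction of the third-order tensor $\mathbb{K}_\delta$ with the second-order tensor $\nabla\bv$ (viewed as a vector-valued bilinear pairing) corresponds to applying the linear map $A \mapsto \mathbb{K}_\delta A$ to $A = \nabla\bv$. Because $\delta\,\mathbb{K}_\delta \to \mathbb{K}$ componentwise by the lemma and $\nabla\bv$ is a fixed (independent of $\delta$) second-order tensor at the point under consideration, passing to the limit is immediate: $(\delta\,\mathbb{K}_\delta)\,\nabla\bv \to \mathbb{K}\,\nabla\bv$.

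Next I would invoke the explicit formula \eqref{K_A} of Lemma~\ref{lem_K_delta} with $A = \nabla\bv$, which gives
\[
\mathbb{K}\,\nabla\bv = \frac{3}{32}\left((\nabla\bv + \nabla\bv^T)\bn + (\tr(\nabla\bv) - \nabla\bv\,\bn\cdot\bn)\,\bn\right).
\]
Finally, I would use the elementary identity $\tr(\nabla\bv) = \Div\bv$ to rewrite the trace term, producing exactly the right-hand side of \eqref{K_delta_grad_v}.

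There is no real obstacle here; the only mild point worth flagging explicitly is the legitimacy of interchanging the limit in $\delta$ with the action on $\nabla\bv$, which is justified simply because tensor contraction is linear and $\nabla\bv$ does not depend on $\delta$. All the substantive work, namely the explicit evaluation of the spherical integrals producing the entries $\mathbb{K}_{ijk}$ and the algebraic simplification into the form \eqref{K_A}, has already been carried out inside the proof of Lemma~\ref{lem_K_delta}.
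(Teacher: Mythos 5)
Your proposal is correct and coincides with the paper's own treatment: the corollary is stated there as an immediate application of Lemma~\ref{lem_K_delta} with $A=\nabla\bv$, using $\tr(\nabla\bv)=\Div\bv$. Your additional remark on the legitimacy of passing the limit through the (linear, $\delta$-independent) contraction with $\nabla\bv$ is a harmless and accurate elaboration of the same argument.
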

The local limit of  peridynamics' natural interface condition \eqref{natural} is given by the following result.
%, which implies that
% the local interface condition \eqref{traction_cond} is not recoverable from the nonlocal interface condition \eqref{natural}.
\begin{prop}
\label{prop_wrong_interface_conditions}
%Let $\bx\in\Gamma$ and $\bn=\bn(\bx)$ be the unit normal to the interface $\Gamma$. 
%Assume that the material properties $\mu$ and $\lambda$ are  smooth on $\Omega\setminus\Gamma$ and have 
%jump discontinuities across the interface $\Gamma$. 
Assume that the material properties $\mu$ and $\lambda$ are given by \eqref{lambda}.
Assume further that $\bu$ 
is continuous on $\Omega$ and smooth on $\Omega\setminus\Gamma$.
Then for $\bx\in\Gamma$
\begin{eqnarray}
 \label{local_of_perid_natural}
\nonumber
\limd \delta(\Ldel \bu)(\bx) &=& \frac{45}{32} \bigg(\jump{(\mu_++\mu)(\nabla\bu+\nabla\bu^T)}\bn+
\jump{(\mu_++\mu)\Div\bu}\bn \\
&&\;\;\;\;\;\;\;\;\;\; - \jump{(\mu_++\mu)\nabla\bu}\bn\cdot\bn\;\bn + \frac{4}{5}\jump{(\lambda-\mu)\Div\bu}\bn\bigg).
\end{eqnarray}

\end{prop}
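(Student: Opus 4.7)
The plan is to decompose $\delta\,\Ldel\bu = \delta\,\Ls\bu + \delta\,\Ld\bu$ and evaluate each limit separately, using the intermediate expressions that already appear in the proofs of Lemma~\ref{nonconvg_1} and Lemma~\ref{nonconvg_2}, together with Lemma~\ref{lem_K_delta} and Corollary~\ref{cor_K_delta}. The key observation is that both $\Ls\bu$ and $\Ld\bu$ blow up only through a factor $1/\delta$ coming from the leading singular term, so multiplying by $\delta$ produces finite, explicit limits expressed through the tensor $\mathbb{K}$ and the vector integral $\limd \frac{3\delta}{|B_\delta|}\int_{\Bp{\bx}}\frac{\by-\bx}{|\by-\bx|^2}d\by = \tfrac{9}{8}\bn$.

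First, I would use equation \eqref{Ldel_s_5} from the proof of Lemma~\ref{nonconvg_1}, which gives
\[
(\Ls\bu)(\bx) = \frac{15}{|B_\delta|}\int_{\Bp{\bzz}}\frac{\bz\otimes\bz\otimes\bz}{|\bz|^4}d\bz\;\bigl(2\mu_+\nabla\bu(\bx^+)-(\mu_++\mu_-)\nabla\bu(\bx^-)\bigr)+\BigO{1}.
\]
Multiplying by $\delta$ and sending $\delta\to 0$, Lemma~\ref{lem_K_delta} yields $\delta\,\Ls\bu \to 15\,\mathbb{K}A$ where $A := 2\mu_+\nabla\bu(\bx^+)-(\mu_++\mu_-)\nabla\bu(\bx^-)$. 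By \eqref{K_A} this equals
\[
\frac{45}{32}\Bigl((A+A^T)\bn + (\tr A - A\bn\cdot\bn)\bn\Bigr).
\]
Recognizing $A = \jump{(\mu_++\mu)\nabla\bu}$ (since on the $\pm$ side of $\Gamma$ the coefficient $\mu_++\mu$ equals $2\mu_+$ and $\mu_++\mu_-$ respectively), and using $\tr(\nabla\bu) = \Div\bu$, this produces the first three terms on the right-hand side of \eqref{local_of_perid_natural}.

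Next, I would apply equations \eqref{Ld_5} and \eqref{Ld_6} to compute the deviatoric contribution. Multiplying \eqref{Ld_5} by $\delta$ and passing to the limit gives
\[
\limd \delta(\Ld\bu)(\bx) = \jump{(\lambda-\mu)\Div\bu}\cdot\frac{9}{8}\bn.
\]
Since $\frac{9}{8} = \frac{45}{32}\cdot\frac{4}{5}$, this matches the fourth term in \eqref{local_of_perid_natural}. Adding the two limits proves the proposition.

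The calculation is structurally routine once the pieces are assembled; the only mildly delicate point is bookkeeping the jump notation: one must confirm that the quantity appearing naturally from Lemma~\ref{nonconvg_1}, namely $2\mu_+\nabla\bu(\bx^+) - (\mu_++\mu_-)\nabla\bu(\bx^-)$, genuinely equals $\jump{(\mu_++\mu)\nabla\bu}$ with $\mu_+$ treated as a fixed constant and $\mu$ as the piecewise-constant field. Given that identification, the $\mathbb{K}A$ formula of Lemma~\ref{lem_K_delta} delivers each of the three structural terms (symmetric part, trace part, normal-normal subtraction) in the stated form, and no further computation beyond \eqref{Ld_6} is needed for the $\Ld$ part.
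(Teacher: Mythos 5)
Your proposal is correct and follows essentially the same route as the paper: the paper likewise splits $\delta\Ldel\bu$ into the $\Ls$ part, handled via \eqref{Ldel_s_5} together with Lemma~\ref{lem_K_delta}/Corollary~\ref{cor_K_delta} applied to $A=\jump{(\mu_++\mu)\nabla\bu}$, and the $\Ld$ part, handled via \eqref{Ld_5}--\eqref{Ld_6}. Your write-up merely makes explicit the bookkeeping (the identification of $A$ with the jump and the factor $\frac{9}{8}=\frac{45}{32}\cdot\frac{4}{5}$) that the paper leaves implicit.
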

\begin{proof}
Let $\bx\in\Gamma$. Using Corollary \ref{cor_K_delta} and from  \eqref{Ldel_s_5} 
%and \eqref{K_delta_grad_v}, 
, one finds that
\begin{eqnarray}
\label{L_s_6}
\nonumber
\limd\delta(\Ldel_s \bv)(\bx)&=& \frac{45}{32} \bigg(\jump{(\mu_++\mu)(\nabla\bu+\nabla\bu^T)}\bn+
\jump{(\mu_++\mu)\Div\bu}\bn%\\
%&&\;\;\;\;\;\;\;\;\;\; 
- \jump{(\mu_++\mu)\nabla\bu}\bn\cdot\bn\;\bn\bigg).\\
\end{eqnarray}
And from \eqref{Ld_5} and \eqref{Ld_6}, one finds that
\begin{eqnarray}
 \label{Ld_7}
\limd\delta\left(\Ld\bu\right)(\bx)=
 \frac{9}{8}\jump{(\lambda-\mu)\Div\bu}\bn.
\end{eqnarray}
Equation \eqref{local_of_perid_natural} follows from \eqref{L_s_6} and \eqref{Ld_7}.
%\qed 
\end{proof}
\begin{remark}
Note that for $\bx\in\Gamma$ and $\sigma$ given by \eqref{sigma} then
\begin{eqnarray}
\label{sigma_jump}
 \jump{\sigma}\bn=\jump{\lambda\Div\bu}\bn+\jump{\mu(\nabla\bu+\nabla\bu^T}\bn.
\end{eqnarray}
Comparing \eqref{sigma_jump} and \eqref{local_of_perid_natural} we conclude that 
the local interface condition \eqref{traction_cond} is not recoverable from the nonlocal interface condition \eqref{natural}.
\end{remark}
% Thus, \eqref{local_of_perid_natural} can be written as
% \begin{eqnarray}
%  \label{local_of_perid_natural2}
% \nonumber
% \limd \frac{32}{45} \delta(\Ldel \bu)(\bx) &=&  \bigg(\jump{(\mu_++\mu)(\nabla\bu+\nabla\bu^T)}\bn+
% \jump{(\mu_++\mu)\Div\bu}\bn \\
% &&\;\;\;\;\;\;\;\;\;\; - \jump{(\mu_++\mu)\nabla\bu}\bn\cdot\bn\;\bn + \frac{4}{5}\jump{(\lambda-\mu)\Div\bu}\bn\bigg).
% \end{eqnarray}

%One possibility
%A candidate for peridynamics nonlocal interface conditions is the following equation
%It is easy to  see from Theorem \ref{nonconvg_thm}, Lemma \ref{nonconvg_1}, and Lemma \ref{nonconvg_2} that the equation
%is a candidate for peridynamics nonlocal interface condition.
%Using We note that the operator $\Ldel$

% LATEX hint
%If the label picks up the section or list number instead of the figure number, put the label inside the caption to ensure correct numbering.
\begin{figure}[t]

\centering
    \includegraphics[width=.3\textwidth]{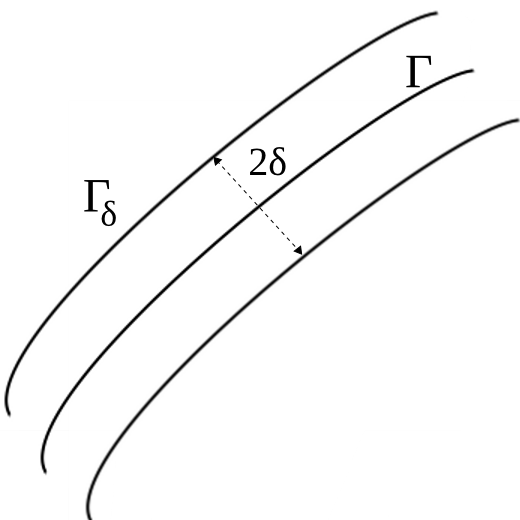}
\caption{\label{fig_gamma_delta}The extended interface $\Gamma_\delta$.}   
\end{figure}

%%%%%%%%%%%%%%%%%%%%%%%%%%%%%%%%%%%%%%
%% Subsection 5.2
%%%%%%%%%%%%%%%%%%%%%%%%%%%%%%%%%%%%%%

\subsection{A peridynamic interface model}
\label{sec_perid_interface_model}
Let $\Gamma_\delta$ be the set defined by
\[
 \Gamma_\delta=\{\bx\in\Omega:|\bx-\Gamma|<\delta\},
\]
where $|\bx-\Gamma|$ denotes the distance between the point $\bx$ and the interface. We refer to this three-dimensional set 
as the {\it extended interface}. An illustration of this set is shown in Figure \ref{fig_gamma_delta}.

The peridynamics material  interface model, under conditions of equilibrium, is given by
\begin{equation}
\label{interface_model}
 \left\{
\begin{array}{rll}
 \displaystyle \Ldel_*\bu(\bx) &=\bb(\bx),\;\;\; &\bx\in\Omega\\
 \\
 \displaystyle \Ldel_*\bu(\bx) &=0,\;\;\; &\bx\in \Gamma_\delta
%\displaystyle\jump{\bu}&=0,\;\;\; &\bx\in \Gamma
%\displaystyle \bu(\bx^+) &=\bu(\bx^-),\;\;\; &\bx\in \Gamma
\end{array},
\right.
\end{equation}
where
\begin{eqnarray}
\label{L*}
\Ldel_*\bu=\Ldel\bu+1_{\Gamma_\delta}\, \Ldel_{\Gamma_\delta}\bu.
\end{eqnarray}
Here $\Ldel$ is  given by \eqref{Ldel0}, \eqref{Ldel_s_2}, and \eqref{Ldel_d_3}, $1_{\Gamma_\delta}$ is the indicator function
\begin{equation*}
 1_{\Gamma_\delta}(\bx)=\left\{
\begin{array}{ll}
 \displaystyle 1,& \bx\in\Gamma_\delta\\
 \displaystyle 0,& \bx\not\in\Gamma_\delta
 \end{array},
\right.
\end{equation*}
and the operator $\Ldel_{\Gamma_\delta}$ is defined by
\begin{eqnarray}
\label{L_Gamma}
\nonumber
\Ldel_{\Gamma_\delta}\bu(\bx)&=& -\frac{15}{|B_\delta|}\int_{B_\delta(\bx)}
  \mu(\bx) \frac{(\by-\bx)
\otimes(\by-\bx)}{|\by-\bx|^4}\big( \bu(\by)-\bu(\bx)\big)
  \,d\by,\\
  \nonumber
  && +\frac{1}{4}\frac{9}{|B_\delta|^2}\int_{B_\delta(\bx)}\int_{B_\delta(\by)}
 \left(\lambda(\by)-\mu(\by)\right)  \frac{\by-\bx}{|\by-\bx|^2}
\otimes\frac{\bz-\by}{|\bz-\by|^2}\,\bu(\bz)
 \,d\bz d\by\\
 %\nonumber
& & +\frac{5}{4} \frac{9}{|B_\delta|^2}\int_{B_\delta(\bx)}\int_{B_\delta(\by)}
 \mu(\by)  \frac{\by-\bx}{|\by-\bx|^2}
\cdot\frac{\bz-\by}{|\bz-\by|^2}\,\bu(\bz)
 \,d\bz d\by \cdot \bn(\bx)\;\;\bn(\bx).%\\
\end{eqnarray}

Similarly, the general peridynamics material  interface model is given by
 \begin{equation}
\label{interface_model_dynamics}
 \left\{
\begin{array}{rll}
 \displaystyle \rho(\bx) \ddot{\bu}(\bx,t) &= \Ldel_* \bu(\bx) + \bb(\bx,t),
 \;\;\; &\bx\in\Omega\\
 \\
 \displaystyle \Ldel_*\bu(\bx) &=0,\;\;\; &\bx\in \Gamma_\delta
%\displaystyle\jump{\bu}&=0,\;\;\; &\bx\in \Gamma
%\displaystyle \bu(\bx^+) &=\bu(\bx^-),\;\;\; &\bx\in \Gamma
\end{array}.
\right.
\end{equation}

The convergence of the peridynamics interface model \eqref{interface_model} to the local interface model \eqref{interface_pde} is given
by the next result.
\begin{thm}
\label{thm_interface_model}
 Assume that $\mu$ and $\lambda$ are given by \eqref{lambda} and that the vector field $\bu$ 
is continuous on $\Omega$ and smooth on $\Omega\setminus\Gamma$. Then if
\begin{equation}
\label{nonlocal_inteface_cond}
\Ldel_*\bu(\bx) =0,\;\;\; \mbox{for all } \;\bx\in \Gamma_\delta,
\end{equation}
then as $\delta \rightarrow 0$,
\begin{enumerate}
\item 
\begin{equation}
\label{lim_Ldel*}
\Ldel_* \bu\longrightarrow \N\bu,\;\;\;\; \mbox{ in } L^p(\Omega)^3,\;\;\;\mbox{ for } 1\leq p<\infty, \mbox{ and }
\end{equation}
%for all $p\geq 1$.
%and
\item 
%for $\bx\in\Gamma$,
\begin{equation}
\label{jump_cond_1}
\jump{\sigma}\bn=0, \;\;\;\mbox{ for } \bx\in\Gamma.
\end{equation}
\end{enumerate}

%Moreover
%\begin{equation}
%\limd \delta(\Ldel_* \bu)(\bx)=\frac{45}{32}\, \jump{\sigma}\bn,\;\;\; \bx\in \Gamma,
%\end{equation}
%and therefore, if $\Ldel_*\bu(\bx) =0$ for all $\bx\in \Gamma_\delta$
\end{thm}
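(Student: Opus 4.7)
The plan is to prove the two conclusions essentially independently, with part (1) being a direct bookkeeping argument that invokes earlier results and part (2) being a pointwise consequence of the nonlocal interface condition. For the $L^p$ convergence, I would split
\[
\|\Ldel_* \bu - \N \bu\|_{L^p(\Omega)^3}^p
  = \int_{\Omega \setminus \Gamma_\delta} |\Ldel_* \bu - \N \bu|^p\,d\bx
    + \int_{\Gamma_\delta} |\Ldel_* \bu - \N \bu|^p\,d\bx .
\]
On $\Omega\setminus\Gamma_\delta$ the indicator in \eqref{L*} vanishes, so $\Ldel_*\bu = \Ldel\bu$; moreover any such $\bx$ is at distance at least $\delta$ from $\Gamma$, hence $B_\delta(\bx)\cap\Gamma=\emptyset$ and both $\bu$ and $\mu,\lambda$ are smooth on $B_\delta(\bx)$. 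Theorem~\ref{thm_convg} then gives pointwise convergence $\Ldel\bu(\bx)\to\N\bu(\bx)$, and a standard dominated convergence argument kills the first term. On $\Gamma_\delta$ the hypothesis \eqref{nonlocal_inteface_cond} forces $\Ldel_*\bu\equiv 0$, so the integrand reduces to $|\N\bu|^p$; since $\bu$ is smooth away from $\Gamma$ and $|\Gamma_\delta|=\BigO{\delta}$, this term is $\BigO{\delta}$ and vanishes as $\delta\to 0$.

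For the jump condition, note that $\Gamma\subset\Gamma_\delta$, so \eqref{nonlocal_inteface_cond} applies at every $\bx\in\Gamma$ and gives $\delta\,\Ldel_*\bu(\bx)=0$ for all $\delta$. Passing to the limit along $\delta\to 0$ and using the identity
\[
\limd \delta\,\Ldel_*\bu(\bx) = \tfrac{45}{32}\jump{\sigma}\bn,\qquad \bx\in\Gamma,
\]
which is precisely \eqref{nonlocal_traction}, yields $\jump{\sigma}\bn = 0$ on $\Gamma$. Thus once \eqref{nonlocal_traction} is established, both conclusions follow in a few lines.

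The main labor, and the place where I expect the real difficulty to sit, is the verification of \eqref{nonlocal_traction}. Proposition~\ref{prop_wrong_interface_conditions} already computes $\limd\delta\,\Ldel\bu(\bx)$ explicitly in terms of the jumps $\jump{(\mu_++\mu)(\nabla\bu+\nabla\bu^T)}\bn$, $\jump{(\mu_++\mu)\Div\bu}\bn$, $\jump{(\mu_++\mu)\nabla\bu}\bn\cdot\bn\,\bn$, and $\jump{(\lambda-\mu)\Div\bu}\bn$. What remains is an analogous computation of $\limd\delta\,\Ldel_{\Gamma_\delta}\bu(\bx)$: the three integrals in \eqref{L_Gamma} are of exactly the same types as those treated in Lemmas~\ref{nonconvg_1} and~\ref{nonconvg_2}, so their scaled limits are produced by the same Taylor expansion on each side of $\Gamma$ combined with the rotation-plus-spherical-integration device of Lemma~\ref{lem_K_delta} and Corollary~\ref{cor_K_delta}. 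The coefficients $-1$, $\tfrac14$, and $\tfrac54$, together with the projection $(\,\cdot\,\bn)\,\bn$ appearing in the last term of \eqref{L_Gamma}, have been calibrated so that when the scaled limit of $\Ldel_{\Gamma_\delta}\bu$ is added to that of $\Ldel\bu$, the spurious pieces (the $\mu_+$ ghost contribution, the $\Div\bu\,\bn$ term, and the normal-projection term) cancel, collapsing the sum to the symmetric expression $\tfrac{45}{32}(\jump{\lambda\Div\bu}\bn + \jump{\mu(\nabla\bu+\nabla\bu^T)}\bn) = \tfrac{45}{32}\jump{\sigma}\bn$. Verifying this algebraic cancellation term by term, with the correct spherical-integral constants, is the crux of the proof.
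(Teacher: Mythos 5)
Your overall architecture is exactly the paper's: part (1) is handled by observing that $\Ldel_*=\Ldel$ off $\Gamma_\delta$ (so Theorem~\ref{thm_convg} applies pointwise), that the hypothesis kills the operator on $\Gamma_\delta$, and that $|\Gamma_\delta|\to 0$; part (2) is reduced to the single identity $\limd\delta\,\Ldel_*\bu(\bx)=\tfrac{45}{32}\jump{\sigma}\bn$ on $\Gamma$, combined with $\delta\,\Ldel_*\bu(\bx)=0$. Both reductions are correct and match the paper's Part (1) and the opening of its Part (2).

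The problem is that you then declare the identity $\limd\delta\,\Ldel_*\bu(\bx)=\tfrac{45}{32}\jump{\sigma}\bn$ to be "the crux" and stop, asserting that the coefficients $-1$, $\tfrac14$, $\tfrac54$ "have been calibrated" so that the spurious terms cancel. That is precisely the statement to be proved, not a step you may cite: equation \eqref{nonlocal_traction} in the introduction is a forward reference to this theorem's proof, so invoking it is circular. Your description of which terms must cancel (the $\mu_+$ ghost in $\jump{(\mu_++\mu)\,\cdot\,}$, the $\jump{\mu\Div\bu}\bn$ term absorbed by the extra $\tfrac14\Ld$ since $\tfrac54\cdot\tfrac98=\tfrac{45}{32}$, and the normal projection $\jump{\mu\nabla\bu}\bn\cdot\bn\,\bn$) is accurate, but the claim that all three integrals in \eqref{L_Gamma} are "of exactly the same types as those treated in Lemmas~\ref{nonconvg_1} and~\ref{nonconvg_2}" is not. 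The first term is a one-sided variant of $\Ls$ and the second is literally $\tfrac14\Ld$, but the third term is structurally new: it is a double integral in which the inner kernel is contracted (dot product, not tensor product) and the result is projected onto $\bn\otimes\bn$. Its scaled limit is the content of the paper's Lemma~\ref{nnn_lemma}, which requires a Taylor expansion of $\bu$ inside the inner integral, the identity \eqref{identity_identity} to collapse that integral to $\tfrac{|B_\delta|}{3}\nabla\bu(\by)\tfrac{\by-\bx}{|\by-\bx|^2}$, a second one-sided expansion of $\nabla\bu$ across $\Gamma$, and the half-ball limit \eqref{Ld_6} to produce $\tfrac{45}{32}\jump{\mu\nabla\bu}\bn\cdot\bn\,\bn$. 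Without carrying out this computation (and the analogous one for $\Ldel_1$ via Corollary~\ref{cor_K_delta}), the cancellation you describe remains an unverified assertion, so part (2) of your proof is incomplete.
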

\begin{remark}
\begin{itemize}
 \item The first part of Theorem \ref{thm_interface_model} shows that imposing the nonlocal interface condition \eqref{nonlocal_inteface_cond} 
implies that
the Navier operator $\N$ is the local limit of the operator $\Ldel_*$ and, consequently, the model \eqref{interface_model}
satisfies C(i).
\item The second part of Theorem \ref{thm_interface_model} shows that the local interface condition \eqref{jump_cond_1}
can be recovered from the local limit of the
nonlocal interface condition \eqref{nonlocal_inteface_cond} and, consequently, the model \eqref{interface_model}
satisfies C(ii).
\item The peridynamics interface model \eqref{interface_model} satisfies C(iii).
\end{itemize}

\end{remark}

\begin{proof}
\noindent {\bf Part (1)}. We show that \eqref{nonlocal_inteface_cond} implies \eqref{lim_Ldel*}.

Let $\bx\in\Omega$ with a distance of at least $2\delta$ from $\partial \Omega$. Then 
for $\bx\not\in\Gamma_\delta$, and by using \eqref{L*} we obtain 
\begin{equation}
\label{L*isL}
\Ldel_*\bu(\bx)=\Ldel\bu(\bx).
\end{equation}
From \eqref{L*isL} and by using Theorem \ref{thm_convg}, one finds
\begin{equation}
\label{limL*1}
\limd\Ldel_*\bu(\bx)=\N(\bx).
\end{equation}
On the other hand, for $\bx\in\Gamma_\delta$, and by using the assumption \eqref{nonlocal_inteface_cond}, one finds that
\begin{equation}
\label{limL*2}
\limd \Ldel_*\bu(\bx)=0.
\end{equation}
Since $\Gamma_\delta\rightarrow \Gamma$ as $\delta\rightarrow 0$ and that
$|\Gamma|=0$, it follows from  \eqref{limL*1} and \eqref{limL*2} that
\begin{equation}
\label{limL*3}
\limd \Ldel_*\bu(\bx)=\N(\bx), \mbox{ for \it{almost every }} \bx\in \Omega.
\end{equation}
Using \eqref{limL*3} and Lebesgue's dominated convergence theorem, \eqref{lim_Ldel*} follows.\\

\noindent {\bf Part (2)}. We show that \eqref{nonlocal_inteface_cond} implies \eqref{jump_cond_1}.

Let $\bx\in\Gamma$. Then,  by multiplying both sides of \eqref{nonlocal_inteface_cond} by $\delta$ and taking the limit, one obtains
\begin{equation}
\label{limdL*1}
\limd \delta\Ldel_*\bu(\bx)=0.
\end{equation}
Next, we show that
\begin{eqnarray}
 \label{limdL*2}
\limd \delta\Ldel_* \bu(\bx) &=& \frac{45}{32} \bigg(\jump{\lambda\Div\bu}\bn+\jump{\mu(\nabla\bu+\nabla\bu^T}\bn\bigg)\\
\label{limdL*3}
 &=& \frac{45}{32} \jump{\sigma}\bn.
\end{eqnarray}
Equation \eqref{jump_cond_1} follows from 
\eqref{limdL*1} and \eqref{limdL*3}. Thus, it remains to prove \eqref{limdL*2} to complete the proof.

From \eqref{L*} and since $\bx\in\Gamma$, $\Ldel_*\bu(\bx)$ can be written as
\begin{eqnarray}
\label{L*decompose}
\nonumber
\Ldel_*\bu(\bx)&=& \Ldel\bu(\bx)+\Ldel_{\Gamma_\delta}\bu(\bx)\\
\nonumber
&=& \left(\Ls\bu(\bx)+\Ld\bu(\bx)\right)+\left(\Ldel_1\bu(\bx)+\frac{1}{4}\Ld\bu(\bx)+\Ldel_2\bu(\bx)\right)\\
&=& \Ls\bu(\bx)+\frac{5}{4}\Ld\bu(\bx)+\Ldel_1\bu(\bx)+\Ldel_2\bu(\bx),
\end{eqnarray}
where
\begin{eqnarray}
\label{L*decompose2}
\Ldel_1\bu(\bx)&=& -\frac{15}{|B_\delta|}\int_{B_\delta(\bx)}
  \mu(\bx) \frac{(\by-\bx)
\otimes(\by-\bx)}{|\by-\bx|^4}\big( \bu(\by)-\bu(\bx)\big)
  \,d\by,\\
\nonumber\\
\label{L*decompose3}
\Ldel_2\bu(\bx)&=& 
\frac{5}{4} \frac{9}{|B_\delta|^2}\int_{B_\delta(\bx)}\int_{B_\delta(\by)}
 \mu(\by)  \frac{\by-\bx}{|\by-\bx|^2}
\cdot\frac{\bz-\by}{|\bz-\by|^2}\,\bu(\bz)
 \,d\bz d\by \cdot\bn(\bx)\;\;\bn(\bx).
\end{eqnarray}
We note that the definition of $\Ldel_1$ is similar to that of $\Ldel_s$. Thus, and since $\mu$ is given by \eqref{lambda}, an argument similar to the derivation of \eqref{Ldel_s_5} in Lemma \ref{nonconvg_1} yields
\begin{eqnarray}
\label{Ldel_1_1}
%\nonumber
(\Ldel_1 \bu)(\bx)&=& -\frac{15}{|B_\delta|}\int_{\Bp{\bzz}}
   \frac{\bz\otimes\bz\otimes\bz}{|\bz|^4}  \,d\bz\;\;\mu_+\left(\nabla\bu(\bx^+)-\nabla\bu(\bx^-\right)+\BigO{1}.
\end{eqnarray}
From  \eqref{Ldel_1_1} and \eqref{K_delta_grad_v}, one finds that
\begin{eqnarray}
\label{L_1_2}
%\nonumber
\limd\delta(\Ldel_1 \bu)(\bx)&=& \frac{45}{32} \;\mu_+\;\bigg(\jump{\nabla\bu+\nabla\bu^T}\bn+
\jump{\Div\bu}\bn
 - \jump{\nabla\bu}\bn\cdot\bn\;\bn\bigg).
\end{eqnarray}
Combining \eqref{L_1_2}, \eqref{L_s_6}, and \eqref{Ld_7}, we obtain
\begin{eqnarray}
\label{L_sd1}
\nonumber
\limd\delta\left(\Ldel_s+\frac{5}{4}\Ldel_d+\Ldel_1\right)(\bu)(\bx)&=& \frac{45}{32} \bigg(\jump{\lambda\Div\bu}\bn+\jump{\mu(\nabla\bu+\nabla\bu^T}\bn
- \jump{\nabla\bu}\bn\cdot\bn\;\bn\bigg).\\
\end{eqnarray}
Equation \eqref{limdL*2} follows from \eqref{L_sd1}, \eqref{L*decompose3}, \eqref{L*decompose}, and Lemma \ref{nnn_lemma}, completing the proof 
of  Part (2).
%\qed 
\end{proof}

\begin{lem}
\label{nnn_lemma}
Let $\bx$ be a point on the interface $\Gamma$ and $\bn=\bn(\bx)$ be the unit normal to the interface. Assume that the vector field $\bv$ is smooth on $\Omega\setminus\Gamma$. Then 
\begin{eqnarray}
\label{nnn}
\limd\delta(\Ldel_2 \bv)(\bx)&=& \frac{45}{32}\jump{\nabla\bv}\bn\cdot\bn\;\bn.
\end{eqnarray}
\end{lem}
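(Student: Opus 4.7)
The operator $\Ldel_2$ is structurally an $\Ld$-type operator in which $\lambda-\mu$ has been replaced by $\mu$ and the resulting vector is double-projected onto the interface normal. My plan is to mirror the proof of Proposition~\ref{prop2}, as adapted to the interface setting in the derivation of~\eqref{Ld_5}--\eqref{Ld_6} used in Lemma~\ref{nonconvg_2}. The key observation is that, after pulling the outer $\bn$ inside the integrand, the scalar $\phi(\bz):=\bv(\bz)\cdot\bn$ plays the role that $\bv$ played in the $\Ld$ analysis, with one crucial difference: the inner integral $\int\frac{\bz-\by}{|\bz-\by|^2}\phi(\bz)\,d\bz$ is vector-valued, so its Taylor reduction produces $\nabla\phi=(\nabla\bv)^T\bn$ rather than $\Div\bv$.

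The first step is the inner $\bw$-integral reduction. After changing variables $\bh=\by-\bx$, $\bw=\bz-\by$ and Taylor-expanding $\phi$ about $\bx+\bh$, the constant term integrates to zero by symmetry, and the linear term combined with the identity $\int_{B_\delta(\bzz)}\frac{\bw\otimes\bw}{|\bw|^2}\,d\bw=\frac{|B_\delta|}{3}I$ (the same one used to obtain \eqref{identity_identity}) yields $\frac{|B_\delta|}{3}(\nabla\bv(\bx+\bh))^T\bn$. The higher-order Taylor remainders contribute $O(\delta)$ to the inner integral and hence $O(1)$ to $\Ldel_2\bv$. Substitution gives
\[
\Ldel_2\bv(\bx)=\frac{15}{4|B_\delta|}\left(\int_{B_\delta(\bzz)}\mu(\bx+\bh)\frac{\bh}{|\bh|^2}\cdot(\nabla\bv(\bx+\bh))^T\bn\,d\bh\right)\bn+O(1).
\]

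The second step is the interface-splitting of the outer integral. On $\Bp{\bzz}$ the coefficients reduce to $\mu_+$ and $\nabla\bv(\bx^+)+O(|\bh|)$; on $\Bm{\bzz}$ they reduce to $\mu_-$ and $\nabla\bv(\bx^-)+O(|\bh|)$. The $O(|\bh|)$ corrections integrate to $O(1)$. Using $\int_{B_\delta(\bzz)}\frac{\bh}{|\bh|^2}\,d\bh=0$ (i.e., $\int_{\Bm{\bzz}}\frac{\bh}{|\bh|^2}\,d\bh=-\int_{\Bp{\bzz}}\frac{\bh}{|\bh|^2}\,d\bh$), the two half-ball contributions collapse to a single jump term:
\[
\Ldel_2\bv(\bx)=\frac{15}{4|B_\delta|}\left(\int_{\Bp{\bzz}}\frac{\bh}{|\bh|^2}\,d\bh\right)\cdot\jump{\mu(\nabla\bv)^T\bn}\;\bn+O(1).
\]
Multiplying by $\delta$, invoking \eqref{Ld_6} in the form $\limd\frac{\delta}{|B_\delta|}\int_{\Bp{\bzz}}\frac{\bh}{|\bh|^2}\,d\bh=\frac{3}{8}\bn$, and using the identity $\bn\cdot M^T\bn=M\bn\cdot\bn$ with the linearity of $\jump{\cdot}$, produces the claim \eqref{nnn}, with the prefactor $\frac{45}{32}=\frac{15}{4}\cdot\frac{3}{8}$.

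The main obstacle I anticipate is controlling the inner-integral remainder when $\bx+\bh$ lies within distance $\delta$ of $\Gamma$, because then $B_\delta(\bx+\bh)$ crosses the interface and $\nabla\phi$ is discontinuous inside the domain of the $\bw$-integral. The relevant tubular slab near $\Gamma$, intersected with $B_\delta(\bzz)$, has volume $O(\delta)$, while the remaining weight $\mu(\bx+\bh)|\bh|^{-1}$ is integrable, so the contribution of this bad region is bounded before the $\delta$-multiplication and hence vanishes in the limit. A clean way to organize this, analogous to the interface-splitting used in the proof of Lemma~\ref{nonconvg_1}, is to carry the Taylor remainder explicitly and estimate it using the one-sided smoothness of $\bv$ on each component of $\Omega\setminus\Gamma$.
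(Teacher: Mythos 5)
Your argument is, in substance, the paper's own proof: reduce the inner $\bz$-integral by a Taylor expansion about $\by$, kill the zeroth-order term with the symmetry identity \eqref{sym}, convert the first-order term with the second-moment identity \eqref{identity_identity}, then split the resulting outer integral across $\Gamma$, absorb the one-sided remainders into $\BigO{1}$, and invoke the half-ball first-moment limit \eqref{Ld_6} to produce the factor $\tfrac38\bn$ and hence $\tfrac{45}{32}=\tfrac{15}{4}\cdot\tfrac38$. The only structural difference is cosmetic: you contract with the outer $\bn$ at the outset and work with the scalar $\bv\cdot\bn$, whereas the paper keeps the computation vectorial and contracts at the end; the two are equivalent via $\bn\cdot M^{T}\bn=M\bn\cdot\bn$. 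Note that, exactly like the paper's own \eqref{n_5}, your computation delivers the $\mu$-weighted jump $\jump{\mu\nabla\bv}\bn\cdot\bn\,\bn$; the missing $\mu$ in \eqref{nnn} is the paper's discrepancy, not yours.

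One claim in your closing paragraph is not right as stated. Since $\bx\in\Gamma$, \emph{every} $\by\in B_\delta(\bx)$ lies within distance $\delta$ of $\Gamma$, so the set of outer integration points for which the inner ball $B_\delta(\by)$ straddles the interface is all of $B_\delta(\bx)$ — not a slab of small relative volume — and the dismissal ``volume $\BigO{\delta}$, hence bounded before the $\delta$-multiplication'' does not go through. For $\bz$ on the far side of $\Gamma$ from $\by$ the Taylor remainder is $\jump{\nabla\bv}(\bz-\bz_\Gamma)+\BigO{|\bz-\by|^2}$, i.e.\ first order rather than second order in $|\bz-\by|$, and a crude bound on the contribution of this piece to $\Ldel_2\bv(\bx)$ gives $\BigO{1/\delta}$, the same order as the main term; it is not eliminated by the symmetry $\bh\mapsto-\bh$. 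To be fair, the paper's proof has precisely the same unaddressed point — its remainder \eqref{L2_remainder1} tacitly assumes $\bv\in C^2$ along the whole segment joining $\by$ and $\bz$, which fails when that segment crosses $\Gamma$ — so your write-up is no less rigorous than the original. But neither argument, as written, actually rules out an order-one correction to the limit coming from the interface-crossing portion of the inner integral, and your proposed fix needs to confront that region head-on rather than declare it small.
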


\begin{proof}
It is sufficient to show that
\begin{eqnarray}
\label{n_1}
\limd\frac{3\delta}{|B_\delta|^2}\int_{B_\delta(\bx)}\int_{B_\delta(\by)}
 \mu(\by)  \frac{\by-\bx}{|\by-\bx|^2}
\cdot\frac{\bz-\by}{|\bz-\by|^2}\,\bv(\bz)
 \,d\bz d\by&=& \frac{3}{8}\jump{\nabla\bv}\bn.
\end{eqnarray}
The Taylor expansion of $\bv$ about $\bz=\by$ is given by
\begin{equation}
\label{L2_taylor1}
\bv(\bz)=\bv(\by)+\nabla\bv(\by) \;(\bz-\by)+\br(\bv;\bz,\by),
\end{equation}
where
\begin{equation}
\label{L2_remainder1}
\br(\bv;\bz,\by)=\frac{1}{2} \nabla\nabla\bv(\bxi)\, \left((\bz-\by)\otimes(\bz-\by)\right)
\end{equation}
for some $\bxi$ on the line segment joining $\bz$ and $\by$. Using \eqref{L2_taylor1}, the integral on the left hand side of \eqref{n_1} becomes
\begin{eqnarray}
\label{n_2}
\nonumber
\frac{3\delta}{|B_\delta|^2}\int_{B_\delta(\bx)}\int_{B_\delta(\by)}
 \mu(\by)  \frac{\by-\bx}{|\by-\bx|^2}
\cdot\frac{\bz-\by}{|\bz-\by|^2}\,\bv(\bz)
 \,d\bz d\by&&\\
 \nonumber
\\
 \nonumber
&&\hspace*{-3cm}= \frac{3\delta}{|B_\delta|^2}
 \int_{B_\delta(\bx)}\mu(\by)  \frac{\by-\bx}{|\by-\bx|^2}\cdot\int_{B_\delta(\by)}
\frac{\bz-\by}{|\bz-\by|^2}\,d\bz \;\bv(\by)
  d\by\\
  \nonumber
\\
 \nonumber
 &&\hspace*{-3cm}+ \frac{3\delta}{|B_\delta|^2}
 \int_{B_\delta(\bx)}\mu(\by)  \nabla\bv(\by)\int_{B_\delta(\by)}
\frac{(\bz-\by)\otimes(\bz-\by)}{|\bz-\by|^2}\,d\bz \;\frac{\by-\bx}{|\by-\bx|^2}
  d\by\\
  \nonumber
\\
 \nonumber
 &&\hspace*{-3cm}+ \frac{3\delta}{|B_\delta|^2}
 \int_{B_\delta(\bx)}\int_{B_\delta(\by)}\mu(\by)  \frac{\by-\bx}{|\by-\bx|^2}\cdot
\frac{\bz-\by}{|\bz-\by|^2}\,\br(\bv;\bz,\by)\,d\bz
  d\by.\\
\end{eqnarray}
We note that by using \eqref{sym}, the first term on the right hand side of \eqref{n_2} is equal to zero and
it is straightforward to show that the third term on the right hand side of \eqref{n_2} is $\BigO{\delta}$.
Thus, by using \eqref{identity_identity} in the second term, equation \eqref{n_2} becomes
%\begin{eqnarray}
%\label{n_3}
%\nonumber
%\frac{3\delta}{|B_\delta|^2}\int_{B_\delta(\bx)}\int_{B_\delta(\by)}
% \mu(\by)  \frac{\by-\bx}{|\by-\bx|^2}
%\cdot\frac{\bz-\by}{|\bz-\by|^2}\,\bv(\bz)
% \,d\bz d\by &=&
 %&&\\
 %\nonumber
%\\
 %\nonumber
 %&&\hspace*{-2cm}
% \frac{\delta}{|B_\delta|}
% \int_{B_\delta(\bx)}\mu(\by)  \nabla\bv(\by)\;\frac{\by-\bx}{|\by-\bx|^2}
%  d\by\\
%  &&+ \BigO{\delta}
%\end{eqnarray}
\begin{eqnarray}
\label{n_3}
\nonumber
\hspace*{-.5cm}\frac{3\delta}{|B_\delta|^2}\int_{B_\delta(\bx)}\int_{B_\delta(\by)}
 \mu(\by)  \frac{\by-\bx}{|\by-\bx|^2}
\cdot\frac{\bz-\by}{|\bz-\by|^2}\,\bv(\bz)
 \,d\bz d\by %&&
 %\\
% \nonumber
%\\
% \nonumber
 %&&\hspace*{-2cm}
 &=& \frac{\delta}{|B_\delta|}
 \int_{B_\delta(\bx)}\mu(\by)  \nabla\bv(\by)\;\frac{\by-\bx}{|\by-\bx|^2}
  d\by
  + \BigO{\delta}\\
\nonumber
% &&\hspace*{-2cm}
 &=& \frac{\delta}{|B_\delta|}
 \int_{\Bp{\bx}}\mu(\by)  \nabla\bv(\by)\;\frac{\by-\bx}{|\by-\bx|^2}
  d\by\\
  \nonumber
 %&&\hspace*{-1.5cm}
  &&+\frac{\delta}{|B_\delta|}
 \int_{\Bm{\bx}}\mu(\by)  \nabla\bv(\by)\;\frac{\by-\bx}{|\by-\bx|^2}
  d\by  + \BigO{\delta}.\\  
\end{eqnarray}
Since $\bv$ is smooth on each side of $\Gamma$, then for $\by$ on the $+$side of $\Gamma$ (i.e., $\by\in\Bp{\bx}$), $\nabla\bv$ can be expanded as
\begin{eqnarray}
\label{L2_taylor_1}
\nabla\bv(\by)=\nabla\bv(\bx)+\bR_+(\nabla\bv;\bx,\by),
\end{eqnarray}
where
\begin{eqnarray}
%\nonumber
%\nabla\bv(\bx^+)&:=&\lim_{\by\rightarrow\bx, \by\in\Bp{\bzz}}\nabla\bv(\by),\\
\label{L2_remainder_1}
\bR_+(\nabla\bv;\bx,\by)&=&\nabla\nabla\bv(\bxi_+)\;(\by-\bx)
\end{eqnarray}
for some $\bxi_+$ on the line segment joining $\bx$ and  $\by$. Similarly, $\nabla\bv$ can be expanded  on the 
$-$side of $\Gamma$. For $\by\in\Bm{\bx}$,
\begin{eqnarray}
\label{L2_taylor_2}
\nabla\bv(\by)=\nabla\bv(\bx)+\bR_-(\nabla\bv;\bx,\by),
\end{eqnarray}
where
\begin{eqnarray}
%\nonumber
%\nabla\bv(\bx^+)&:=&\lim_{\by\rightarrow\bx, \by\in\Bp{\bzz}}\nabla\bv(\by),\\
\label{L2_remainder_2}
\bR_-(\nabla\bv;\bx,\by)&=&\nabla\nabla\bv(\bxi_-)\;(\by-\bx)
\end{eqnarray}
for some $\bxi_-$ on the line segment joining $\bx$ and  $\by$.
By substituting \eqref{L2_taylor_1} and \eqref{L2_taylor_2} on the right hand side of \eqref{n_3} and expanding the integrals, we find
\begin{eqnarray}
\label{n_4}
\nonumber
%\hspace*{-1cm}
\frac{3\delta}{|B_\delta|^2}\int_{B_\delta(\bx)}\int_{B_\delta(\by)}
 \mu(\by)  \frac{\by-\bx}{|\by-\bx|^2}
\cdot\frac{\bz-\by}{|\bz-\by|^2}\,\bv(\bz)
 \,d\bz d\by %&&
 %\\
 %\nonumber
%\\
% \nonumber
% &&\hspace*{-7cm}
&=&\frac{\delta}{|B_\delta|}
 \int_{\Bp{\bx}}\mu_+  \nabla\bv(\bx^+)\;\frac{\by-\bx}{|\by-\bx|^2}
  d\by\\
  \nonumber
%  &&\hspace*{-6.5cm} 
&&+\frac{\delta}{|B_\delta|}
 \int_{\Bm{\bx}}\mu_-  \nabla\bv(\bx^-)\;\frac{\by-\bx}{|\by-\bx|^2}
  d\by\\
  \nonumber
% &&\hspace*{-6.5cm}
&&+\frac{\delta}{|B_\delta|}
 \int_{\Bp{\bx}}\mu_+  \bR_+(\nabla\bv;\bx,\by)\;\frac{\by-\bx}{|\by-\bx|^2}
  d\by\\
  \nonumber
%  &&\hspace*{-6.5cm}
 && +\frac{\delta}{|B_\delta|}
 \int_{\Bm{\bx}}\mu_-  \bR_-(\nabla\bv;\bx,\by)\;\frac{\by-\bx}{|\by-\bx|^2}
  d\by\\
  \nonumber
  &&+\BigO{\delta}.\\
\end{eqnarray}
Using \eqref{L2_remainder_1} and \eqref{L2_remainder_2} one can easily show that the third and the fourth terms on the right hand side of \eqref{n_4} are $\BigO{\delta}$, and using \eqref{sym} for the first and second terms on the right hand side of \eqref{n_4}, we find
\begin{eqnarray}
\label{n_5}
\nonumber
%\hspace*{-1cm}
\frac{3\delta}{|B_\delta|^2}\int_{B_\delta(\bx)}\int_{B_\delta(\by)}
 \mu(\by)  \frac{\by-\bx}{|\by-\bx|^2}
\cdot\frac{\bz-\by}{|\bz-\by|^2}\,\bv(\bz)
 \,d\bz d\by &&
 \\
 \nonumber
\\
\nonumber
 &&\hspace*{-3cm}
=
 (\mu_+  \nabla\bv(\bx^+)-\mu_-  \nabla\bv(\bx^-))\;\frac{\delta}{|B_\delta|}\;\int_{\Bp{\bx}}\frac{\by-\bx}{|\by-\bx|^2}
  d\by+\BigO{\delta}.\\
\end{eqnarray}
Equation \eqref{n_1} follows from \eqref{n_5} and  \eqref{Ld_6}, completing the proof.
%\qed 
\end{proof}

%Nov 2014 revised
We conclude this section by providing a mechanical
interpretatation to \eqref{L*} and \eqref{L_Gamma}. Equation \eqref{limdL*3} in the proof of Theorem~\ref{thm_interface_model} provides an important relationship between the jump in the local traction across the interface and the nonlocal operator $\Ldel_*$. This implies that, for points $\bx\in\Gamma$, the expression $\frac{32}{45}\delta \Ldel_*\bu(\bx)$ represents the nonlocal analogue of $\jump{\sigma}\bn$. Therefore, we can interpret $\frac{32}{45}\delta \Ldel_*\bu(\bx)$ as the jump in the nonlocal traction across the interface. 
Moreover, the operator $\Ldel_{\Gamma_\delta}$,  given by \eqref{L_Gamma}, can be interpreted as the missing term in peridynamics which modifies the jump in the nonlocal traction such  that \eqref{limdL*3} holds true. Furthermore,  Theorem~\ref{thm_interface_model} and \eqref{limdL*3} imply that the nonlocal interface condition \eqref{interface_model_sys_2} is the nonlocal analogue of the local interface condition \eqref{interface_pde0_3} and that the peridynamic interface model given by \eqref{interface_model_sys} is the nonlocal analogue of the local interface model given by \eqref{interface_pde0}.

\bibliographystyle{plain}
\bibliography{Perid_interfaces}    

\begin{thebibliography}{10}

\bibitem{aguiar2013constitutive}
A.~Aguiar and R.~Fosdick.
\newblock A constitutive model for a linearly elastic peridynamic body.
\newblock {\em Mathematics and Mechanics of Solids}, page 1081286512472092,
  2013.

\bibitem{ALperid1}
B.~Alali and R.~Lipton.
\newblock Multiscale dynamics of heterogeneous media in the peridynamic
  formulation.
\newblock {\em J. Elasticity}, pages 1--33, 2010.
\newblock doi:10.1007/s10659-010-9291-4.

\bibitem{Alali_Gunzburger1}
B.~Alali, K.~Liu, and M.~Gunzburger.
\newblock A generalized nonlocal calculus with application to the peridynamics
  model for solid mechanics.
\newblock [Submitted].

\bibitem{nonlocal_calc_peridy_2013}
Q.~Du, M.~Gunzburger, R.B. Lehoucq, and K.~Zhou.
\newblock Analysis of the volume-constrained peridynamic navier equation of
  linear elasticity.
\newblock {\em Journal of Elasticity}, 113(2):193--217, 2013.

\bibitem{emmrich2007well}
E.~Emmrich and O.~Weckner.
\newblock On the well-posedness of the linear peridynamic model and its
  convergence towards the navier equation of linear elasticity.
\newblock {\em Communications in Mathematical Sciences}, 5(4):851--864, 2007.

\bibitem{tadele_du_2013}
T.~Mengesha and Q.~Du.
\newblock Nonlocal constrained value problems for a linear peridynamic navier
  equation.
\newblock {\em Journal of Elasticity}, 116(1):27--51, 2014.

\bibitem{Silling39}
S.~A. Silling.
\newblock Reformulation of elasticity theory for discontinuities and long-range
  forces.
\newblock {\em Journal of the Mechanics and Physics of Solids}, 48:175--209,
  2000.

\bibitem{silling2007peridynamic}
S.~A. Silling, M~Epton, O.~Weckner, J.~Xu, and E.~Askari.
\newblock Peridynamic states and constitutive modeling.
\newblock {\em Journal of Elasticity}, 88(2):151--184, 2007.

\bibitem{lin_Silling}
S.A. Silling.
\newblock Linearized theory of peridynamic states.
\newblock {\em Journal of Elasticity}, 99(1):85--111, 2010.

\bibitem{peridconvg}
S.A. Silling and R.B. Lehoucq.
\newblock Convergence of peridynamics to classical elasticity theory.
\newblock {\em Journal of Elasticity}, 93(1):13--37, 2008.

\end{thebibliography}
%\bibliography{Nonlocal}    

\end{document}